\newtheorem{theorem}{Theorem}[section]
\newtheorem{lemma}[theorem]{Lemma}
\theoremstyle{definition}
\newtheorem{assumption}[theorem]{Assumption}
\newtheorem{remark}[theorem]{Remark}
\numberwithin{equation}{section}
\theoremstyle{plain}
\numberwithin{equation}{section} 
\numberwithin{figure}{section} 
\theoremstyle{plain}
\theoremstyle{plain}
\theoremstyle{remark}
\newtheorem*{acknowledgement*}{Acknowledgement}
\newcommand{\cA}{{\mathcal A}}
\newcommand{\cB}{{\mathcal B}}
\newcommand{\cE}{{\mathcal E}}
\newcommand{\cF}{{\mathcal F}}
\newcommand{\cG}{{\mathcal G}}
\newcommand{\cH}{{\mathcal H}}
\newcommand{\cI}{{\mathcal I}}
\newcommand{\cK}{{\mathcal K}}
\newcommand{\cL}{{\mathcal L}}
\newcommand{\cO}{{\mathcal O}}
\newcommand{\cP}{{\mathcal P}}
\newcommand{\cS}{{\mathcal S}}
\newcommand{\cX}{{\mathcal X}}
\newcommand{\cY}{{\mathcal Y}}
\newcommand{\te}{{\theta}}
\newcommand{\Om}{{\Omega}}
\newcommand{\om}{{\omega}}
\newcommand{\ve}{{\varepsilon}}
\newcommand{\del}{{\delta}}
\newcommand{\Del}{{\Delta}}
\newcommand{\gam}{{\gamma}}
\newcommand{\Gam}{{\Gamma}}
\newcommand{\sig}{{\sigma}}
\newcommand{\al}{{\alpha}}
\newcommand{\be}{{\beta}}
\newcommand{\la}{{\lambda}}
\newcommand{\bbC}{{\mathbb C}}
\newcommand{\bbE}{{\mathbb E}}
\newcommand{\bbN}{{\mathbb N}}
\newcommand{\bbR}{{\mathbb R}}
\newcommand{\bbT}{{\mathbb T}}
\newcommand{\bbZ}{{\mathbb Z}}
\newcommand{\bbI}{{\mathbb I}}
\begin{document}

\title[]{On the Asymptotic moments and Edgeworth expansions for some processes in random dynamical environment}
 \vskip 0.1cm
 \author{Yeor Hafouta \\
\vskip 0.1cm
Department of mathematics\\
The Ohio State University and the Hebrew University}%
\email{yeor.hafouta@mail.huji.ac.il}%

\thanks{ }
\subjclass[2010]{60F05, 37H99, 60K37, 37D20, 37A25}%
\keywords{Markov chains; random environment; random dynamical systems; limit theorems; Edgeworth expansions}
\dedicatory{  }
 \date{\today}

\begin{abstract}\noindent
We prove that certain asymptotic moments exist for some random distance expanding dynamical systems
and Markov chains in random dynamical environment, and compute them in terms of the
derivatives at  $0$ of an appropriate pressure function. It will follow  
that these moments satisfy the relations
that the asymptotic moments $\gam_k=\lim_{n\to\infty}n^{-[\frac k2]}\bbE(\sum_{i=1}^n X_i)^k$ of sums of independent and identically distributed centered random variables satisfy. Under certain mixing conditions we will also estimate the convergence rate towards these limits. The arguments in the proof of these results yield that the partial sums generated by the random Ruelle-Perron-Frobenius triplets and all of their parametric derivatives (considered as functions on the base) corresponding to appropriate random transfer or Markov operators satisfy several probabilistic limit theorems such as the central limit theorem.
We will also obtain certain (Edgeworth) asymptotic
expansions related to the central limit theorem for such processes. Our proofs rely on a (parametric) random complex Ruelle-Perron-Frobenius theorem, which replaces some of the spectral techniques used in literature in order to 
obtain limit theorems for deterministic dynamical systems and Markov chains.
\end{abstract}
\maketitle
\markboth{Y. Hafouta}{Asymptotic moments and Edgeworth expansions}
\renewcommand{\theequation}{\arabic{section}.\arabic{equation}}
\pagenumbering{arabic}

 \section{Introduction}\label{sec1}\setcounter{equation}{0}

Probabilistic limit theorems for dynamical systems and Markov chains is a well studied topic. 
One  way to derive such results is relying on some quasi-compactness (or spectral gap)
of an appropriate transfer or Markov operator, together with a suitable perturbation theorem 
(see \cite{Neg1}, \cite{Neg2}, \cite{GH} and \cite{HH}). This  quasi-compactness  can often be verified only via 
an appropriate Ruelle-Perron-Frobenius (RPF) theorem, which is the main key for thermodynamic formalism type constructions.
Probabilistic limit theorems for
random dynamical systems  and Markov chains in random dynamical environments were also studied in the literature (see, for instance, \cite{Kifer-1996}, \cite{Kifer-1998}, \cite{book}, \cite{Aimino} ,\cite{drag} and references therein). In these circumstances, the probabilistic behaviour of the appropriate process is determined by compositions of random operators, and not of a single operator, so no spectral theory can be exploited, and instead, many of these results rely on an appropriate version  
of the RPF theorem for random operators. For instance, the large deviations theorem in \cite{Kifer-1996} rely on 
such results for real operators, and 
the conditions guaranteeing that the central limit theorem in \cite{Kifer-1998}  holds true
 can be often verified by an RPF theorem for random real operators. Relying on certain contraction properties of random complex transfer and Markov operators, with respect to a complex version of the Hilbert projective metric due to H.H. Rugh \cite{Rug} (see also \cite{Dub1} and \cite{Dub2}), we proved in \cite{book} an RPF theorem for random complex operators and presented the appropriate random complex thermodynamic formalism type constructions,
which was one of the main keys in the proof of  versions of the Berry-Esseen theorem and the local central limit 
theorem (LCLT) for certain processes in random dynamical environment, and in the proof of some nonconventional LCLT for dynamical systems (see Chapters 2 and 7 of \cite{book}).
 In this paper we  use this RPF theorem in order to derive additional limit theorems, as described in the following paragraphs.

Let $(\Om,\cF,P,\te)$ be an invertible measure preserving system, let $\cX$ be a compact metric space, $\cE_\om\subset\cX$ be a measurable family of compact subsets and $u_\om:\cE_\om\to\bbR$ be a random function, where $\om\in\Om$. Let $T_\om:\cE_\om\to\cE_{\te\om}$ 
be a random distance expanding map  and let
$\xi_0^{\om},\xi_1^{\te\om},\xi_2^{\te^2\om},...$ be a Markov chain in the random dynamical environment $(\Om,\cF,P,\te)$. In this paper we consider sequences of random variables having either the form  $S_n^\om=\sum_{j=0}^{n-1}u_{\te^j\om}\circ T_\om^j(x)$ or the form $\cS_n^\om =\sum_{j=0}^{n-1}u_{\te^j\om}(\xi_j^{\te^j\om})$, where 
$T_\om^j=T_{\te^{j-1}\om}\circ\cdots\circ T_{\te\om}\circ T_\om$ and $x$ is distributed according to a special (Gibbs) measure $\mu_\om$. 
Recall the following result. Let $X_1,X_2,X_3,...$ be a sequence of centered, independent and identically distributed random variables with finite moments of all orders, and set $\sig^2=\bbE X_1^2$ and $\zeta=\bbE X_1^3$.
Then  for each integer  $k\geq 2$ the limit 
\[
\gam_k=\lim_{n\to\infty}n^{-[\frac k2]}\bbE\big(\sum_{j=1}^n X_j\big)^k
\]
of the $k$-th normalized moment
exists. Moreover, $\gam_2=\sig^2$ and $\gam_3=\zeta$, and when
 $k$ is even we have $\gam_k=C_k\sig^k$, where $C_k=2^{-\frac k2}(\frac{k}{2}!)^{-1}k!$, while for odd $k$'s larger than $2$ we have 
$\gam_k=D_k\sig^{k-3}\zeta$, where $D_k=\frac{k!}{3!}2^{-\frac12(k-3)}(\frac{k-3}{2}!)^{-1}$. This result is a direct consequence of the multinomial theorem, and we refer the readers to Lemma 4.1 and Corollary 4.2 in \cite{Liverani} for a generalization to several weakly dependent sequences of random variables.
In this paper, we will prove generalize these results for the sequences  $S_n^\om$ and $\cS_n^\om$ defined above by showing that the $k$-th normalized moments of the random variables $S_n^\om$ and $\cS_n^\om$
behave  like powers of ergodic averages of the second and third derivative at $0$ of some pressure function, which will imply that the asymptotic moments exist, satisfy the desired relations and that they do not depend on $\om$ when $(\Om,\cF,P,\te)$ is ergodic. 
The fact that the second (normalzied) moments behave like an ergodic average was proved in \cite{Kifer-1998} in a more general setup, and here we show that this is true (in our context) also for all the higher moments. 

When $(\Om,\cF,P,\te)$ satisfies some mixing conditions then we obtain almost sure convergence rate of order $n^{-\frac12}\ln n$ towards the asymptotic moments, which allows us to obtain almost optimal convergence rate (of the same order) in the corresponding quenched central limit theorem for the sequences $n^{-\frac12}S_n^\om$ and $n^{-\frac12}\cS_n^\om$. The arguments in the proof  of the above convergence rate also yields that for sufficiently well mixing base maps $\te$ the partial sums generated by the random RPF triplets (corresponding to an appropriate parametric family of random transfer operators) and  all of their parametric derivatives satisfy several probabilistic limit theorems (see Remark \ref{RPF trip LimThms}), which also seems to be a new result.

Let $X_1,X_2,X_3,...$ be a sequence of centered  iid random variables with finite moments. Then,
under certain assumptions on the behavior of the characteristic function of $X_1$, a classical result (see \cite{Feller}) states that (Edgeworth) expansions of the form
\[
\sup_{s\in\bbR}\big|\sqrt{2\pi}P(\sum_{j=1}^n X_j\leq\sig\sqrt n s)-\int_{-\infty}^s e^{-\frac{t^2}{2}}dt-\sum_{j=1}^d n^{-\frac{j}2}P_j(s)e^{-\frac{s^2}2}\big|=o(n^{-\frac{d}2})
\]
hold true for some polynomials $P_1,P_2,...,P_d$. 
Edgeworth expansion have been obtained  in \cite{Neg2} for some classes of Markov chains and in \cite{Coelho} and \cite{Liverani}  for deterministic dynamical systems (and Markov chains), and in this paper 
we will also obtain Edgeworth type expansions for the  processes $S_n^\om$ and $\cS_n^\om$  described earlier,
with random polynomials $P_i=P_{i,\om,n}$ whose degree depends only on $i$, and the coefficients of $P_{i,\te^{-n}\om,n}$
converge as $n\to\infty$. 
When the dynamical environment $(\Om,\cF,P,\te)$ satisfies certain mixing conditions we also obtain certain convergence rate in the later limits. In general, in order to obtain Edgeworth type expansions by analysing the asymptotic behaviour of the characteristic functions 
$\varphi_n(t)=\bbE e^{it S_n}$ of an underlying sequence $S_n,\,n\geq1$ of random variables, it is also necessary to have estimates on the decay of these functions (as $n\to\infty$) for large $t$'s, which is beyond the applications of the random complex 
thermodynamic formalism (the decay estimates sufficient for the
existence of Edgeworth expansions in the deterministic case are discussed
in \cite{Coelho} and \cite{Liverani}). Here $\bbE$ denotes expectation with respect to $P$. 
In the deterministic case, some of these estimates were obtained using the spectral theory of appropriate perturbations of the original transfer or Markov operator. In the circumstances of this paper there is no single operator, but a random family of operators, and we will use the  approach taken in \cite{book} in order to obtain the appropriate estimates in the case of iterates of random operators.

The paper is organized as follows. In Section \ref{sec2} we  formulate our results for random variables $S_n^\om$
whose characteristic function is related to certain random operators $\cA_{it}^\om,\,t\in\bbR$ which  satisfy some complex thermodynamic formalism type assumption, together with certain decay rates of the norms of the iterates 
$\cA_{it}^{\om,n}=\cA_{it}^{\te^{n-1}\om}\circ\cA_{it}^{\te^{n-2}\om}\circ\cdots\circ\cA_{it}^\om$ of these operators for large $t$'s.
In Section \ref{MomSec} we  prove the results concerning the normalized asymptotic moments and in Section \ref{EdgeSec} we  prove the results concerning Edgeworth expansions. Section \ref{secEXM} is devoted to examples of random dynamical systems $T_\om$ and Markov chains $\xi_0^\om,\xi_1^{\te\om},\xi_2^{\te^2\om},...$ in random environments satisfying our assumptions.

\section{Preliminaries and main results}\setcounter{equation}{0}\label{sec2}
Our setup consists of a complete probability space 
$(\Om,\cF,P)$ together with an invertible $P$-preserving transformation $\te:\Om\to\Om$,
of a compact metric space $(\cX,\rho)$ normalized in size so that 
$\text{diam}\cX\leq 1$ together with the Borel $\sig$-algebra $\cB$, and
of a set $\cE\subset\Om\times \cX$ measurable with respect to the product $\sig$-algebra $\cF\times\cB$ such that the fibers $\cE_\om=\{x\in \cX:\,(\om,x)\in\cE\},\,\om\in\Om$ are compact. The latter yields (see \cite{CV} Chapter III) that the mapping $\om\to\cE_\om$ is measurable with respect to the Borel $\sig$-algebra induced by the Hausdorff topology on the space $\cK(\cX)$ of compact subspaces of $\cX$ and the distance function $\rho(x,\cE_\om)$ is measurable in $\om$ for each $x\in \cX$.  
Furthermore, the projection map $\pi_\Om(\om,x)=\om$ is
measurable and it maps any $\cF\times\cB$-measurable set to an
$\cF$-measurable set (see ``measurable projection" Theorem III.23 in \cite{CV}).

Let $(H_1^\om,\|\cdot\|_1)$ be a Banach space of complex valued functions on $\cE_\om$, containing the constant functions, so that $\|g\|_1\geq\sup|g|$ for any $g\in H_1^\om$, and $\|\textbf{1}\|_1=1$,
where $\textbf{1}$ denotes the function taking the constant value $1$ (regardless of its domain).
Denote by $(H_1^\om)^*$ the space of all continuous linear functionals on $(H_1^\om,\|\cdot\|_1)$, equipped with the operator norm $\|\cdot\|_1$.
Let $\cA_z^\om:H_1^\om\to H_1^{\te\om},\,z\in\bbC$ be a family of continuous linear operators, $S_n^\om:\cE_\om\to\bbR,\,n\geq1$ be a sequence of Borel measurable 
functions and $\mu_\om$ be a random probability measure on $\cE_\om$
so that $P$-a.s. we have $\mu_\om(S_n^\om)=0$, $\|S_n^\om\|_{L^k(\cE_\om,\mu_\om)}<\infty$ for any $k\geq1$ 
and  for any $z\in\bbC$ and $n\geq1$,
\[
\mu_\om(e^{zS_n^\om})=\mu_{\te^n\om}(\cA_z^{\om,n}\textbf{1})
\]
where $\cA_z^{\om,n}=\cA_z^{\te^{n-1}\om}\circ\cA_z^{\te^{n-2}\om}\circ\cdots\circ\cA_z^\om$.
 In this paper we will study certain (asymptotic) properties the distribution of 
$S_n^\om(x)$, when $x$ is distributed according to $\mu_\om$,  and $\om$ ranges over a set of full $P$-probability.
Our basic requirements from the operators are described in the following
\begin{assumption}\label{BaseAss}
There exist constants $\rho,C>0$ and $\del\in(0,1)$  so that $P$-a.s. for any 
$z\in B(0,\rho):=\{\zeta\in\bbC: |\zeta|<\rho\}$ 
there is a 
triplet  consisting of a nonzero complex number $\la_\om(z)$,  a function $h_\om(z)\in H_1^\om$
and a linear functional $\nu_\om(z)\in(H_1^\om)^*$ such that $\nu_\om(z)\textbf{1}=1$ and for any
 $g\in H_1^\om$ and $n\geq1$,
\begin{equation}\label{ExpConvAss}
\left\|(\la_{\om,n}(z))^{-1}\cA_z^{\om,n}g-\nu_\om(z)(g)h_{\te^n\om}(z)\right\|_1\leq C\|g\|_1\del^n
\end{equation}
where  $\la_{\om,n}(z)=\prod_{j=0}^{n-1}\la_{\te^j\om}(z)$.
Moreover, the above triplet is measurable in $\om$, analytic in $z$ and the random variables $\sup_{z: |z|<\rho}|\la_\om(z)|$,  $\sup_{z: |z|<\rho}\|h_\om(z)\|_1$ and $\sup_{z: |z|<\rho}\|\nu_\om(z)\|_1$ are bounded. Furthermore, 
$\nu_\om(0)=\mu_\om$, $h_\om(0)=\textbf{1}$ and $\la_\om(0)=1$.
\end{assumption}
We refer the readers' to Section \ref{secEXM} for several examples in which Assumption \ref{BaseAss} holds true.
Under Assumption \ref{BaseAss} we have $\nu_\om(z)(h_\om(z))=1$, $\cA_z^\om h_\om(z)=\la_\om(z)h_{\te\om}(z)$
and $(\cA_z^\om)^*\nu_{\te\om}(z)=\la_\om(z)\nu_\om(z)$ (see the beginning of Section \ref{MomSec}), 
where $(\cA_z^\om)^*: (H_1^{\te\om})^*\to (H_1^\om)^*$ is the dual operator of $\cA_z^\om$ (i.e. the triplet from the assumption is a random complex Ruelle-Perron-Frobenius triplet, see Chapter 4 in \cite{book}). 
In particular $\cA_0^\om \textbf{1}=\textbf{1}$.


Our first result is the following

\begin{theorem}\label{MomThm}

(i) By possibly decreasing $\rho$, $P$-a.s. we can define an analytic function $\Pi_\om:B(0,\rho)\to\bbC$
so that $\Pi_\om(0)=0$, $\la_\om(z)=e^{\Pi_\om(z)}$ and 
$|\Pi_\om(z)|\leq c_0$ for any $z\in B(0,\rho)$, for some constant $c_0$ which does not depend on $\om$
and $z$.

(ii) Suppose that $\int S_n^\om\mu_\om=0$ for $P$-almost any $\om$. For any $k\geq 2$ and $n\geq1$, let $\gam_{k,n}$ be the random variable defined by 
$\gam_{k,n}(\om)=n^{-[\frac k2]}\int_{\cE_\om} (S_n^\om(x))^kd\mu_\om(x)$. Then, the limits
\[
\gam_k(\om)=\lim_{n\to\infty}\gam_{k,n}(\om)=\lim_{n\to\infty}\gam_{k,n}(\te^{-n}\om)
\]
exist $P$-a.s. and in $L^p$, for any $1\leq p<\infty$. Moreover,
with $\sig^2=\gam_2$, $\zeta=\gam_3$, $C_k=2^{-\frac k2}(\frac{k}{2}!)^{-1}k!$ and 
$D_k=\frac{k!}{3!}2^{-\frac12(k-3)}(\frac{k-3}{2}!)^{-1}$,
when $k$ is even we have $\gam_k=C_k\sig^k$, while for odd $k$'s we have 
$\gam_k=D_k\sig^{k-3}\zeta$. Furthermore,
$\sig^2=\gam_2=\bbE[\Pi_\om''(0)|\cI]$ and 
$\zeta=\gam_3=\bbE[\Pi_\om'''(0)|\cI]$, where $\cI$ is the sub-$\sigma$ algebra of $\cF$ containing only the $\te$-invariant sets.
In particular, 
when the measure preserving system $(\Om,\cF,P,\te)$ is ergodic then all $\gam_k(\om)$'s do not depend on $\om$.

(iii) In fact, we can write 
\[
\int_{\cE_\om} (S_n^\om(x))^kd\mu_\om(x)=\mu_{\te^n\om}(h_{\te^n\om}^{(j)}(0))+\sum_{s=1}^{[\frac{k}{2}]}C_{\om,n}^{(k)}(s)n^s+\ve_{\om,n}
\]
where $|\ve_{\om,n}|\leq c_1\eta^n$ for some constants $c_1>0$ and $0<\eta<1$, 
\[
\lim_{n\to\infty}C_{\om,n}^{(k)}([\frac{k}{2}])=\lim_{n\to\infty}C_{\te^{-n}\om,n}^{(k)}([\frac{k}{2}])=\gam_k(\om)
\]
 $C_{\om,n}^{(k)}([\frac{k}{2}]-1)=C_{\om,n}^{(k)}(1)=0$ and for all other $s$'s, 
\begin{eqnarray*}
\lim_{n\to\infty}C_{\te^{-n}\om,n}^{(k)}(s)=\\
\sum_{j=0}^k j!\binom{k}{j}\mu_\om(h_\om^{(k-j)}(0))\sum_{m_2,...,m_j}\big(\prod_{l=2}^j(i!)^{m_l}m_l!\big)^{-1}\prod_{l=2}^j(\bbE_P[\Pi_\om^{(l)}|\cI])^{m_l}.
\end{eqnarray*}
Here the sum ranges over all possible choices of nonnegative integers $m_2,...,m_j$ so that $\sum_{l=2}^j m_l=s$
and $\sum_{l=2}^j lm_l=j$, and $f^{(l)}$ stands for the $l$-th derivatives of a function $f$ on $B(0,\rho)$ which takes values at some Banach space. Moreover, closed formulas for the $C_{\om,n}^{(k)}$'s can be recovered from the proof.
\end{theorem}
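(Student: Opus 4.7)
For part (i), since $\la_\om(0)=1$ and $|\la_\om(z)|$ is uniformly bounded on $B(0,\rho)$ by Assumption \ref{BaseAss}, Cauchy's estimates applied to $\la_\om-1$ give a uniform modulus of continuity at $0$. Shrinking $\rho$ if necessary, one arranges $|\la_\om(z)-1|<1/2$ for $z\in B(0,\rho)$ on a full $P$-measure set. The principal branch of the logarithm then defines $\Pi_\om(z):=\log\la_\om(z)$, analytic in $z$, measurable in $\om$, bounded by $\log 2$, and satisfying $\Pi_\om(0)=0$ and $\la_\om=e^{\Pi_\om}$.

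For (ii) and (iii), the starting point is the identity
\[
\mu_\om(e^{zS_n^\om})=\mu_{\te^n\om}(\cA_z^{\om,n}\mathbf{1})=e^{\Pi_{\om,n}(z)}\mu_{\te^n\om}(h_{\te^n\om}(z))+r_n(z,\om),
\]
coming from Assumption \ref{BaseAss} with $g=\mathbf{1}$ and $\nu_\om(z)(\mathbf{1})=1$, where $|r_n(z,\om)|\leq C\del^n$ on some $B(0,\rho')\subset B(0,\rho)$. Both sides being analytic in $z$, Cauchy's integral formula gives $|\partial_z^kr_n(0,\om)|\leq C_k\del^n$ for every $k$. I would introduce the auxiliary function
\[
L_n(z,\om)=\Pi_{\om,n}(z)+\log\mu_{\te^n\om}(h_{\te^n\om}(z)),
\]
analytic near $0$ since $\mu_{\te^n\om}(h_{\te^n\om}(0))=1$, so that $\mu_\om(e^{zS_n^\om})=e^{L_n(z,\om)}+r_n(z,\om)$. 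The hypothesis $\mu_\om(S_n^\om)=0$ at $z=0$ forces $\partial_zL_n(0,\om)=O(\del^n)$ uniformly in $\om$, whence $\bbE[\Pi_\om'(0)\mid\cI]=0$ by Birkhoff. For $l\geq 2$, $\partial_z^lL_n(0,\om)-\Pi_{\om,n}^{(l)}(0)$ is uniformly bounded and Birkhoff gives $n^{-1}\partial_z^lL_n(0,\om)\to\bbE[\Pi_\om^{(l)}(0)\mid\cI]$ $P$-almost surely and in $L^p$.

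Differentiating $e^{L_n(z,\om)}$ $k$ times at $z=0$ via Faà di Bruno yields
\[
\mu_\om((S_n^\om)^k)=\sum_{\substack{m_1,\dots,m_k\geq 0\\ m_1+2m_2+\cdots+km_k=k}}\frac{k!}{\prod_l m_l!(l!)^{m_l}}\prod_l(\partial_z^lL_n(0,\om))^{m_l}+O(\del^n).
\]
Every term with $m_1\geq 1$ carries the factor $(\partial_zL_n(0,\om))^{m_1}=O(\del^n)$ and hence is of size $O(n^{[k/2]}\del^n)=O(\eta^n)$ for any $\eta\in(\del,1)$; only partitions with $m_1=0$, i.e.\ $\sum_{l\geq 2}lm_l=k$, contribute beyond an exponentially small error, and each contributes a term of order $n^s$ with $s=\sum_{l\geq 2}m_l\leq[k/2]$. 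The maximum $s=[k/2]$ is achieved only by $m_2=k/2$ (when $k$ is even) or by $(m_2,m_3)=((k-3)/2,1)$ (when $k$ is odd), which together with the Birkhoff convergence gives the claimed limits $\gam_k=C_k\sig^k$ and $\gam_k=D_k\sig^{k-3}\zeta$ with $\sig^2=\bbE[\Pi_\om''(0)\mid\cI]$ and $\zeta=\bbE[\Pi_\om'''(0)\mid\cI]$. The identification $\gam_k(\om)=\lim_n\gam_{k,n}(\te^{-n}\om)$ follows by writing the sum along the orbit $\te^{-n}\om,\dots,\te^{-1}\om$ and again invoking Birkhoff; in the ergodic case $\cI$ is trivial and $\gam_k$ is deterministic.

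For part (iii), I would use the Leibniz expansion
\[
\partial_z^k[e^{\Pi_{\om,n}(z)}\mu_{\te^n\om}(h_{\te^n\om}(z))]|_{z=0}=\mu_{\te^n\om}(h_{\te^n\om}^{(k)}(0))+\sum_{j=1}^k\binom{k}{j}\mu_{\te^n\om}(h_{\te^n\om}^{(k-j)}(0))\,B_j(\Pi_{\om,n}'(0),\dots,\Pi_{\om,n}^{(j)}(0)),
\]
split each $B_j$ by the implicit power of $n$, namely $\sum_{l\geq 2}m_l$, and define $C_{\om,n}^{(k)}(s)$ as the aggregate coefficient of $n^s$. The limits $\lim_nC_{\te^{-n}\om,n}^{(k)}(s)$ follow from $\te^n(\te^{-n}\om)=\om$ holding $\mu_\om(h_\om^{(k-j)}(0))$ constant in $n$, together with Birkhoff on $n^{-1}\Pi_{\te^{-n}\om,n}^{(l)}(0)\to\bbE[\Pi_\om^{(l)}(0)\mid\cI]$ and the restriction $m_1=0$ imposed in the limit via the bound $\Pi_{\te^{-n}\om,n}'(0)=-\mu_\om(h_\om'(0))+O(\del^n)$. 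The vanishing identities for the intermediate coefficients should arise from cancellations within the Leibniz sum, ultimately traceable to the first-order identity $\partial_z[\mu_\om(h_\om(z))e^{-z\mu_\om(h_\om'(0))}]|_{z=0}=0$ encoded by the zero-mean condition. The principal obstacle is the combinatorial bookkeeping of part (iii): identifying $C_{\om,n}^{(k)}(s)$ in closed form, verifying the vanishing identities through Leibniz-type cancellations, and confirming convergence of the coefficients themselves (and not only of their leading averages).
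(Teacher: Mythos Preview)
Your strategy is the same as the paper's: expand $\mu_\om(e^{zS_n^\om})$ via Assumption~\ref{BaseAss}, differentiate $k$ times at $z=0$ using Fa\`a di Bruno and Leibniz, identify the leading power of $n$, and pass to the limit by Birkhoff's ergodic theorem. Parts (i) and (ii) are handled correctly.

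There is one place where the paper's execution is sharper, and it is exactly what resolves the ``combinatorial bookkeeping'' you flag as the obstacle in (iii). Rather than working only with $\partial_zL_n(0,\om)=O(\del^n)$, the paper first records the RPF relations $\nu_\om(z)h_\om(z)=1$, $(\cA_z^\om)^*\nu_{\te\om}(z)=\la_\om(z)\nu_\om(z)$ and $\nu_\om(z)\textbf{1}=1$ (derived from (\ref{ExpConvAss})), and from these extracts two \emph{exact} vanishing identities:
\[
\la_{\om,n}'(0)=\sum_{j=0}^{n-1}\Pi_{\te^j\om}'(0)=0
\qquad\text{and}\qquad
\mu_\om(h_\om'(0))=0.
\]
The first comes from $\la_{\om,n}(z)=\nu_{\te^n\om}(z)(\cA_z^{\om,n}\textbf{1})$ together with $\nu'(0)\textbf{1}=0$; the second from differentiating $\nu_\om(z)h_\om(z)=1$ at $z=0$. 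The paper then applies Leibniz to the product $\la_{\om,n}(z)\,\mu_{\te^n\om}(h_{\te^n\om}(z))$ and Fa\`a di Bruno only to $\la_{\om,n}=e^{\Pi_{\om,n}}$. The $m_1\geq 1$ terms in Fa\`a di Bruno then vanish identically (not merely up to $O(\del^n)$), and for odd $k$ the $j=k-1$ Leibniz term vanishes because it carries the factor $\mu_{\te^n\om}(h'_{\te^n\om}(0))=0$. This is what produces the clean $O(n^{-1})$ bounds (\ref{Rate even})--(\ref{Rate odd}) and makes the closed formulas for the $C_{\om,n}^{(k)}(s)$ in (iii) explicit; your proposed route through approximate cancellations would have to rediscover these two identities to get there.

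One technical wrinkle: your bound $|r_n(z,\om)|\leq C\del^n$ on a disk is not immediate from (\ref{ExpConvAss}), which only gives $|r_n(z,\om)|\leq C|\la_{\om,n}(z)|\del^n$ with $|\la_{\om,n}(z)|\leq e^{nc_0}$. Either shrink $\rho$ further so that $\sup_{|z|<\rho}|\la_\om(z)|<\del^{-1}$ (possible since $\la_\om(0)=1$ and $\la_\om$ is uniformly bounded and analytic), or argue as the paper does by bounding $|\la_{\om,n}^{(j)}(0)|\leq H_jn^j$ and controlling $\partial_z^k r_n(0,\om)$ via the Leibniz rule on the product $\la_{\om,n}\cdot\del_{\om,n}$ rather than by Cauchy on the disk.
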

As in \cite{Liverani}, the method used in the proof of Theorem \ref{MomThm} yields that the limits $\gam_k$ exist also without the assumption that $\int S_n^\om d\mu_\om=0$, but then the relations between the $\gam_k$'s are more complicated and will not be discussed in this paper.

Now we will describe our results concerning  Edgeworth type expansions.
For any $r>0$ consider the following assumptions

\begin{assumption}\label{A3}
(i) There exists a random variable $a_\om$ so that  $P$-a.s. we have 
$\|\cA_{it}^{\om,n}\|_1\leq a_\om$ for any $t\in(-\rho,\rho)$, where $\rho$ comes from Assumption \ref{BaseAss}.

(ii) For any compact set $J\subset\bbR\setminus\{0\}$, $P$-.a.s for any $n\geq1$ we have 
\[
\lim_{n\to\infty}n^{r+\frac12}\sup_{t\in J}\|\cA^{\om,n}_{it}\|_1=0.
\]
\end{assumption}

\begin{assumption}\label{A4}
There exists a Banach space $(H_2^\om, \|\cdot\|_2)$ containing the constant functions, such that $\|g\|_2\geq\sup|g|$ for any $g\in H_2^\om$, and $\cA_{it}^\om$, $t\in\bbR$, can be (possibly)
extended to $H_2^\om$ so that $\cA_{it}$ is a continuous map between $H_2^\om$ and $H_2^{\te\om}$. Moreover:

(i) There exists a random variable $R_\om$ so that $P$-a.s.
 for any $t\in\bbR$ and $n\geq1$ we have $\|\cA_{it}^{\om,n}\|_2\leq R_\om(1+|t|)$.

(ii) There exist random variables $K_\om,D_\om,C_\om>0$, $r_2(\om)>0$  and $N(\om)$ so that $P$-a.s. for any  $n\geq N(\om)$,
\begin{equation}\label{EqA4}
\sup_{t: K_\om\leq |t|\leq D_\om n^{r}}\|\cA_{it}^{\om,n}\|_2\leq C_\om n^{-r_2(\om)}.
\end{equation}
\end{assumption}

\begin{theorem}\label{EdgeThm}
Suppose that Assumptions \ref{BaseAss} and \ref{A3} hold true with $r=\frac12$. Set 
$\Pi_{\om,n,2}=n^{-1}\sum_{j=0}^{n-1}\Pi_{\te^j\om}''(0)$, and assume that $\sig^2=\lim_{n\to\infty}\Pi_{\om,n,2}=\lim_{n\to\infty}\Pi_{\te^{-n}\om,n,2}=\bbE[\Pi_\om''(0)]|\cI]>0$.

(i) There exists a sequence of polynomials
$P_{\om,n,1}(s)=\sum_{j=0}^{m_1}a_{\om,n,j,1}s^j,\,n\geq1$ with random coefficients, whose degree $m_1$ does not depend on $\om$ and $n$, so that $P$-almost surely we have
\begin{eqnarray*}
\lim_{n\to\infty}n^{\frac12}\sup_{s\in\bbR}\Big|\sqrt{2\pi}\mu_\om\{x\in\cE_\om: S_n^\om(x)\leq\sqrt n s\}-\\\frac{1}{\sqrt{\Pi_{\om,n,2}}}\int_{-\infty}^s e^{-\frac{t^2}{2\Pi_{\om,n,2}}}dt- n^{-\frac{1}2}P_{\om,n,1}(s)e^{-\frac{s^2}2}\Big|=0.
\end{eqnarray*}

(ii) Let $d\in\bbN$. If Assumptions \ref{A3} and  \ref{A4} hold true with $r=\frac{d}2$ and $(\Om,\cF,P,\te)$ is ergodic, then for each $k\geq2$ 
there exists a sequences of polynomials 
$P_{\om,n,k}(s)=\sum_{j=0}^{m_k}a_{\om,n,j,k}s^j,\,n\geq1$ with random coefficients and degree $m_k$ which depends only on $k$, so that $P$-almost surely we have
\begin{eqnarray*}
\lim_{n\to\infty}n^{\frac d2}\sup_{s\in\bbR}\Big|\sqrt{2\pi}\mu_\om\{x\in\cE_\om: S_n^\om(x)\leq\sqrt n s\}-\\\frac{1}{\sqrt{\Pi_{\om,n,2}}}\int_{-\infty}^s e^{-\frac{t^2}{2\Pi_{\om,n,2}}}dt-\sum_{j=1}^d n^{-\frac{j}2}P_{\om,n,j}(s)e^{-\frac{s^2}2}\Big|=0
\end{eqnarray*}

(iii) The coefficients $a_{\om,n,j,k}$ of the above
 polynomials are algebraic combinations of the derivatives of the functions $\mu_{\te^n\om}(h_\om(z))$ and $n^{-1}\sum_{j=0}^{n-1}\Pi_{\te^j\om}(z)$ at $z=0$. In particular, they 
are uniformly bounded in $\om$ and $n$, where $\om$ ranges over a set of probability $1$,  and  
$a_{\te^{-n}\om,n,j,k}$ converges $P$-a.s. and in $L^p$, for any $p\in[1,\infty)$, as $n\to\infty$ towards a limit
$a_{\om,j,k}$ 
 The coefficients $a_{\om,n,j,k}$ and the 
corresponding limits can be recovered from the proof. 
\end{theorem}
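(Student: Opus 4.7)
The plan is to follow the standard Edgeworth scheme --- asymptotic expansion of the characteristic function of $n^{-1/2}S_n^\om$, Fourier inversion to obtain the candidate approximation, and Esseen's smoothing inequality --- with the RPF decomposition of Assumption \ref{BaseAss} playing the role that spectral perturbation theory plays in the deterministic case. The point of departure is that Assumption \ref{BaseAss} together with $\nu_\om(z)(\mathbf{1})=1$ gives $\mu_\om(e^{zS_n^\om})=\la_{\om,n}(z)\mu_{\te^n\om}(h_{\te^n\om}(z))+O(|\la_{\om,n}(z)|\del^n)$ for $|z|<\rho$. Specialising to $z=it/\sqrt n$, writing $\la_{\om,n}(z)=\exp\big(\sum_{j=0}^{n-1}\Pi_{\te^j\om}(z)\big)$ via Theorem \ref{MomThm}(i), and Taylor-expanding both factors around $0$ --- the centering $\mu_\om(S_n^\om)=0$ causes the $O(t/\sqrt n)$ contribution to cancel --- yields, uniformly on $|t|\le \ve\sqrt n$,
\[
\mu_\om(e^{itS_n^\om/\sqrt n})=e^{-t^2\Pi_{\om,n,2}/2}\Big[1+\sum_{j=1}^{d}n^{-j/2}R_{\om,n,j}(it)\Big]+O\big(|t|^{d+1}n^{-(d+1)/2}e^{-ct^2}\big)+O(\del^n),
\]
where the $R_{\om,n,j}$ are polynomials in $it$ whose coefficients are explicit algebraic functions of $\Pi_{\om,n,k}$ for $k\le d+2$ and of $\mu_{\te^n\om}(h_{\te^n\om}^{(\ell)}(0))$ for $\ell\le d$.

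I would then Fourier-invert: the inverse of $e^{-t^2\Pi_{\om,n,2}/2}(it)^k$ is a Hermite polynomial times the $N(0,\Pi_{\om,n,2})$ density, so the candidate $G_{\om,n,d}(s)$ has exactly the form appearing in the theorem, with polynomials $P_{\om,n,j}$ absorbing the density normalisation. Next I would apply Esseen's smoothing inequality with cutoff $T_n\gg n^{d/2}$, which makes the tail term $\|G_{\om,n,d}'\|_\infty/T_n$ equal to $o(n^{-d/2})$, and split $\int_{|t|\le T_n}|\hat F_{\om,n}-\hat G_{\om,n,d}|/|t|\,dt$ into three regions: (a) $|t|\le a_n$ slowly growing, where the expansion above provides an $O(n^{-(d+1)/2}\log n)$ bound; (b) $a_n\le |t|\le \ve\sqrt n$, where the Gaussian factor $e^{-ct^2}$ in the RPF expansion kills the contribution super-polynomially; and (c) $\ve\sqrt n\le |t|\le T_n$, where $|\hat F_{\om,n}(t)|=|\mu_{\te^n\om}(\cA_{it/\sqrt n}^{\om,n}\mathbf{1})|$ is bounded using Assumption \ref{A3}(ii) in part (i) (taking $T_n=C\sqrt n$ with $C$ large so that $t/\sqrt n$ lies in a fixed compact subset of $\bbR\setminus\{0\}$, and then letting $C\to\infty$ to produce the $o(n^{-1/2})$ rate) and using Assumption \ref{A4}(ii) in part (ii) (where the choice $r=d/2$ is precisely calibrated so that $t/\sqrt n\in[K_\om,D_\om n^{r}]$).

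For part (iii), observe that the coefficients of $P_{\om,n,j}$ are polynomial in $\Pi_{\om,n,k}$ and $\mu_{\te^n\om}(h_{\te^n\om}^{(\ell)}(0))$. The uniform $P$-a.s.\ boundedness of $\Pi_\om^{(k)}(0)$ and $h_\om^{(\ell)}(0)$ follows from Theorem \ref{MomThm}(i) and Assumption \ref{BaseAss} via Cauchy's integral formula on $B(0,\rho)$. Replacing $\om$ by $\te^{-n}\om$ turns $\Pi_{\te^{-n}\om,n,k}$ into $n^{-1}\sum_{i=-n}^{-1}\Pi_{\te^i\om}^{(k)}(0)$, which converges $P$-a.s.\ and in every $L^p$ by Birkhoff's ergodic theorem to $\bbE[\Pi_\om^{(k)}(0)|\cI]$ (a constant under ergodicity), while $\mu_{\te^n(\te^{-n}\om)}(h_{\te^n(\te^{-n}\om)}^{(\ell)}(0))=\mu_\om(h_\om^{(\ell)}(0))$ is already independent of $n$. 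Substituting into the explicit formulas for the $P_{\om,n,j}$ then yields the claimed limits.

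The hardest step will be region (c) of the Esseen estimate: bridging the RPF-controlled Gaussian window with the Esseen cutoff $T_n\gg n^{d/2}$ requires genuine polynomial decay of $\|\cA_{it}^{\om,n}\|_2$ over a range of $t$ whose upper edge grows with $n$, and the relation $r=d/2$ in the hypothesis of part (ii) is precisely what allows Assumption \ref{A4}(ii) to cover this range (implicitly, $r_2(\om)$ must exceed $d/2$ to absorb the logarithmic factor from integrating $1/|t|$). A secondary subtlety will be tracking the random thresholds $K_\om,D_\om,C_\om,N(\om)$ from Assumption \ref{A4} so that the resulting $P$-a.s.\ bound combines correctly with the ergodic averages entering the coefficients.
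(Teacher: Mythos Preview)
Your overall scheme matches the paper's: RPF expansion of the characteristic function, Taylor expansion to extract the polynomial corrections, Fourier inversion to produce the candidate $G_{\om,n,d}$, and Esseen's smoothing inequality with the integral split into an inner Gaussian window and an outer region controlled by Assumptions \ref{A3} and \ref{A4}. For part (i) your treatment of region (c) via Assumption \ref{A3}(ii) on a compact $J$ is correct and is exactly what the paper does.

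There is, however, a genuine gap in your handling of region (c) for part (ii). You write that Assumption \ref{A4}(ii) covers the range and add parenthetically that ``$r_2(\om)$ must exceed $d/2$ to absorb the logarithmic factor.'' But Assumption \ref{A4} does \emph{not} require $r_2(\om)>d/2$; with the hypothesis as stated, a single application of \eqref{EqA4} gives only $\|\cA_{it}^{\om,n}\|_2\le C_\om n^{-r_2(\om)}$, and after integrating $1/|t|$ over $[K_\om\sqrt n,\,Bn^{d/2}]$ you obtain at best $O(n^{-r_2(\om)}\ln n)$, which need not be $o(n^{-d/2})$. The paper closes this gap with a separate lemma that uses ergodicity in an essential way: one fixes a set $\Gam$ of positive measure on which $K_\om,D_\om,C_\om,r_2(\om),N(\om)$ are uniformly controlled, uses Birkhoff (or the induced transformation and Kac) to find, for large $n$, roughly $m$ return times $0<m_{k_1}<\cdots<m_{k_m}<n$ to $\Gam$ with gaps of order $n$, and then writes $\cA_{it}^{\om,n}$ as a composition of $m$ blocks each satisfying \eqref{EqA4}. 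Submultiplicativity of operator norms then yields $\|\cA_{it}^{\om,n}\|_2\le L R_\om n^{-mr_2+r}$ uniformly for $K\le|t|\le Dn^{r}$, and since $m$ can be chosen so that $mr_2>r+q$ for any prescribed $q$, one obtains $n^{-q}$ decay for arbitrary $q$. This bootstrap is the missing idea; without it your estimate in region (c) does not close, and your ``secondary subtlety'' about tracking the random thresholds is in fact the main obstacle.
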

Note that when (\ref{MomRateTemp}) below holds true  then  we obtain a converge rate of 
order $n^{-\frac12}\ln n$ for all of the coefficients of 
the $P_{\te^{-n}\om,n,k}$'s. 

\begin{remark}
A natural question is whether Theorem \ref{EdgeThm} (or a version of it with $S_n^{\te^{-n}\om}$ instead of $S_n^\om$) can be obtained with polynomials which depend only on $\om$ and not on $n$ and with $\sig^2$ in place of $\Pi_{\om,n,2}$. Similar arguments to the ones in Remark 1.1 from \cite{Liverani} show that replacing $P_{\om,n,k}$ with a polynomial  $P_{\om,k}$ which does not depend on $k$ is possible only if $n^{\frac k2}|P_{\om,k}(x)-P_{\te,n,k}(x)|$ converges to $0$ as $n\to\infty$ uniformly in $x$, which means that the only candidate for such polynomials are the polynomials $P_{\om,k}$ whose coefficients are the limits of the coefficients of the $P_{\om,n,k}$'s (or the $P_{\te^{-n}\om,n,k}$'s in the ``reverse version").
Proving such a result does not seem possible using only the converge of the coefficients of $P_{\te^{-n}\om,n,k}$ even when $\te$ satisfies some mixing conditions, since this convergence  is derived from the convergence of certain ergodic averages, which, in general, is slower than $n^{-\frac12}$ (even for independent summands). Note that even for Edgeworth expansions of order one (as well as for a Berry-Esseen theorem) it is not clear how to obtain the results with $\sig^2$ in place of $\Pi_{\om,n,2}$, since this essentially requires better convergence rate of the variance of $n^{-\frac12}S_n^\om$ towards $\sig^2$. The difference between this variances and $\sig^2$ behaves as an ergodic average, which leads again to the same problem discussed above (concerning the rate of converges of ergodic averages). 
\end{remark}

 

\subsection{Almost sure convergence rates under mixing conditions}\label{MixSec}
When  $(\Om,\cF,P,\te)$ satisfies certain  mixing assumptions, then, in Section \ref{MomRateSec} 
we obtain  almost sure converges rate of the form
\begin{equation}\label{MomRateTemp}
\left|n^{-[\frac k2]}\int_{\cE_\om} (S_n^\om(x))^kd\mu_\om(x)-\gam_k\right|\leq R_{\om,k} n^{-\frac12}\ln n
\end{equation}
where $R_{\om,k}$ is some random variable.
Under certain mixing condition the authors of \cite{Ste} obtained  converges rates of order 
$(\ln n)^{\frac 32+\del_0}n^{-\frac12},\,\del_0>0$ for $k=2$, and in our setup we obtain such rates for all $k$'s, with 
$\del_0=-\frac12$.
 Using (\ref{MomRateTemp}) with $k=2$, when $\sig^2>0$ we 
also derive in Section \ref{MomRateSec} almost optimal convergence rate in the central limit theorem of the form
\[
\sup_{s\in\bbR}\left|\mu_\om\{x\in\cE_\om: S_n^\om(x)\leq s\sqrt n\}-\frac1{\sqrt{2\pi\sig^2}}\int_{-\infty}^s e^{-\frac{t^2}{2\sig^2}}dt\right|\leq 
c_\om n^{-\frac12}\ln n.
\]

\section{Asymptotic moments: Proof of Theorem \ref{MomThm}}\label{MomSec}\setcounter{equation}{0}
First, since $\la_\om(0)=1$, $\la_\om(\cdot)$ is an analytic function and $|\la_\om(z)|$ is bounded 
uniformly in $\om$ and $z$, where $\om$ ranges over a set of probability $1$ and $z\in B(0,\rho)$, it is indeed possible to construct a function $\Pi_\om(\cdot)$ which satisfies the
conditions stated in Theorem \ref{MomThm} (i), namely, an appropriate branch of the logarithm of $\la_\om(z)$ can be defined on some deterministic neighborhood of $0$.

Next, we will show that  $(\la_\om(z),h_\om(z),\nu_\om(z))$ is an RPF triplet, namely that $P$-a.s. for any $z\in B(0,\rho)$,
\begin{equation}\label{RPF}
\nu_\om(z)(h_\om(z))=1,\,\cA_z^\om h_\om(z)=\la_\om(z)h_{\te\om}(z)\,\,\text{ and }\,\,
(\cA_z^\om)^*\nu_{\te\om}(z)=\la_\om(z)\nu_\om(z).
\end{equation}
Indeed,
in order to prove the second equality in (\ref{RPF}), 
plug in $\te^{-n}\om$ in place of $\om$ in (\ref{ExpConvAss}), and then choose $g=\textbf{1}$, in order to deduce that
\begin{eqnarray*}
\cA_z^\om h_\om(z)=\lim_{n\to\infty}(\la_{\te^{-n}\om,n}(z))^{-1}\cA_z^\om(\cA_z^{\te^{-n}\om,n}\textbf{1})\\=
\la_\om(z)\lim_{n\to\infty}(\la_{\te^{-(n+1)}\te\om,n+1}(z))^{-1}
\cA_z^{\te^{-(n+1)}\te\om,n+1}=\la_\om(z)h_{\te\om}(z).
\end{eqnarray*}
Next, applying  (\ref{ExpConvAss}) with $\te\om$ in place of $\om$ and with $\cA_z^\om g$ in place of $g$ we derive that for any $g\in H_1^\om$,
\begin{eqnarray*}
\nu_{\te\om}(z)(\cA_z^\om g)=\lim_{n\to\infty}\frac{\cA_z^{\te\om,n}(\cA_z^\om g)}{\la_{\te\om,n}(z)h_{\te^{n+1}\om}(z)}\\=\la_\om(z)\lim_{n\to\infty}\frac{\cA_z^{\om,n+1}g}{\la_{\om,n+1}(z)h_{\te^{n+1}\om}(z)}=
\la_\om(z)\nu_\om(z)(g)
\end{eqnarray*}
which implies that $(\cA_z^\om)^*\nu_{\te\om}(z)=\la_\om(z)\nu_\om(z)$. Note that the function $h_\om(z)$ does not vanishes 
when $z$ lies in a deterministic neighborhood of $0$ since $h_\om(0)\equiv1$, the random variable $\sup_{|z|<\rho}\|h_\om(z)\|_1$ is bounded and the map $z\to h_\om(z)$ is analytic.   
Finally, since 
\[
h_\om(z)=\lim_{n\to\infty}\frac{\cA_z^{\te^{-n}\om,n}\textbf{1}}{\la_{\te^{-n}\om,n}(z)}
\] 
and 
\[
\la_{\te^{-n}\om,n}(z)=
\la_{\te^{-n}\om,n}(z)\nu_{\te^{-n}\om}(z)\textbf{1}=
\nu_\om(z)(\cA_z^{\te^{-n}\om,n}\textbf{1})
\]
we obtain that $\nu_\om(z)(h_\om(z))=1$. \qed

Now we will start proving Theorem \ref{MomThm}.
First, it follows from (\ref{RPF}) that $\la_{\om,n}(z)=\nu_{\te^n\om}(z)(\cA_z^{\om,n}\textbf{1})$, and 
so, since $\nu_\om(z)\textbf{1}=1$ and $\mu_\om(e^{zS_n^\om})=\mu_{\te^n\om}(\cA_z^{\om,n}\textbf{1})$,
\[
\la_{\om,n}'(0)=\nu_{\te^n\om}'(0)\textbf{1}+
\frac{ d\nu_{\te^n\om}(0)(\cA_z^{\om,n}\textbf{1})}{dz}\Big|_{z=0}=\frac{d\mu_\om(e^{zS_n^\om})}{dz}\Big|_{z=0}=\mu_\om(S_n^\om)=0.
\]
Next, using Assumption \ref{BaseAss} for any $n\geq1$ we can write
\begin{equation}\label{I}
\mu_\om(e^{zS_n^\om })=\mu_{\te^n\om}(\cA_{z}^{\om,n}\textbf{1})=
\la_{\om,n}(z)\big(\mu_{\te^n\om}(h_{\te^n\om}(z))+\del_{\om,n}(z)\big)
\end{equation}
where $|\del_{\om,n}(z)|\leq C\del^n$. Since $\del_{\om,n}(\cdot)$ is an analytic function, it follows from the Cauchy integral formula that
for any $k\geq 1$ there exists a constant $Q_k$ so that 
$|\del_{\om,n}^{(k)}(z)|\leq Q_k\del^n$ for any $z\in B(0,\frac \rho 2)$. Since $\la_{\om,n}(0)=1$ and $\sup_{z\in B(0,\rho)}|\la_\om(z)|$ is a bounded random variable, the analyticity of $\la_\om(\cdot)$ implies that for any $k\geq1$ we have 
$|\la_{\om,n}^{(k)}(0)|\leq H_k n^k$, for some constant $H_k$ which does not depend on $n$ and $\om$. Therefore, differentiating $k$ times both sides of (\ref{I})  at $z=0$ yields that
\begin{equation}\label{II}
\mu_\om(S_n^\om )^k=\sum_{j=0}^k\binom{k}{j}\la_{\om,n}^{(j)}(0)\mu_{\te^n\om}(h_{\te^n\om}^{(k-j)}(0))+d_{\om,n}
\end{equation}
for some random variable $d_{\om,n}$ so that
$|d_{\om,n}|\leq Ac^n$ for some $A\geq1$ and $c\in(0,1)$, which do not depend on $\om$.
Since $\la_{\om,n}(z)=e^{\sum_{j=0}^{n-1}\Pi_{\te^j\om}(z)}$ and $\Pi_\om(0)=1$, it follows from the Fa\'a di Bruno formula that for any $j$,
\begin{equation}\label{***}
\la_{\om,n}^{(j)}(0)=j!\sum_{s=1}^{[\frac j2]}n^s\sum_{(m_2,...,m_k)\in \Gam_{j,s}}
\big(\prod_{l=2}^j(l!)^{m_l}m_l!\big)^{-1}\prod_{l=2}^{j}\Big(n^{-1}\sum_{i=0}^{n-1}\Pi_{\te^i\om}^{(l)}(0)\Big)^{m_l}
\end{equation}
where $\Gam_{j,s}$ is the set of all $j-1$-tuples $(m_2,...,m_j)$ of nonnegative integers 
so that $\sum_l lm_l=j$ and $\sum_{l}m_l=s$,
and we took into account that 
\[
\sum_{j=0}^{n-1}\Pi_{\te^j\om}'(0)=\la_{\om,n}'(0)=\mu_{\om}(S_n^\om)=0.
\] 
Observe that when $s=[\frac j2]$ 
and $j$ is even  we have $\Gam_{j,s}=\{(\frac j2,0,0,...,0)\}$, while for odd $j$'s we have  $\Gam_{j,s}=\{(\frac{j-3}2,1,0,...,0)\}$. Since $\Pi_\om(z)$ is analytic in $z$ and uniformly bounded in $\om$ and $z$
(where $\om$ ranges over a set of probability $1$ and $z\in B(0,\rho)$),
for each $j$ there exists a constant $L_j$ so that 
$\sup_{z\in B(0,\frac \rho 2)}|\Pi_\om^{(j)}(z)|\leq L_j$ for $P$-almost any $\om$.
When $k$ is even,
by considering the case when $j=k$, we conclude that
 there exist constants $R_k, k\geq2$ so that for even $k$'s we have
\begin{equation}\label{Rate even}
\left|\frac{\mu_\om(S_n^\om )^k}{n^{[\frac k2]}}-C_k\Big(n^{-1}\sum_{i=0}^{n-1}\Pi_{\te^i\om}^{(2)}(0)\Big)^{\frac k2}\right|\leq R_kn^{-1}
\end{equation}
where $C_k=2^{-\frac k2}(\frac k2!)^{-1}k!$. Now, suppose that $k$ is odd. Then the dominating terms come from considering $j=k$ and $j=k-1$. Differentiating both sides of the equality $\nu_\om(z)(h_\om(z))=1$ and plugging in $z=0$ we derive that
\[
\mu_\om(h_\om'(0))=-\nu_\om'(0)\textbf{1}=0
\] 
where we used that $h_\om(0)\equiv\textbf{1}$ and that
$\nu_\om(z)\textbf{1}=1$ for any $z$. Therefore, we actually only need to consider the case when $j=k$ also for odd $k$'s, and so, with $D_k=\frac{k!}{3!}2^{-\frac12(k-3)}(\frac{k-3}{2}!)^{-1}$
 for odd $k$'s large than $2$ we have
\begin{equation}\label{Rate odd}
\left|\frac{\mu_\om(S_n^\om )^k}{n^{[\frac k2]}}-D_k\Big(n^{-1}\sum_{i=0}^{n-1}\Pi_{\te^i\om}^{(2)}(0)\Big)^{\frac{k-3}2}\cdot
\Big(n^{-1}\sum_{i=0}^{n-1}\Pi_{\te^i\om}^{(3)}(0)\Big)\right|\leq R_kn^{-1}.
\end{equation}
Since  the $L^\infty$-norms of
$n^{-1}\sum_{i=0}^{n-1}\Pi_{\te^{i}\om}^{(l)}(0)$ are uniformly bounded in $n$ (for each $l$), 
all the positive integer powers of these averages
converge almost surely and in $L^p$, for any $p\in [1,\infty)$, towards the appropriate power of
$E[\Pi_\om^{(l)}(0)|\cI]$. 
Combining this with the above estimates we complete the proof of Theorem \ref{MomThm}.


\subsection{Convergence rates towards the asymptotic moments under mixing conditions}\label{MomRateSec}

We assume here that $\Om=\cY^z$ for some measurable space $\cY$, and that $(\Om,\cF,P,\te)$ is the shift system
generated by a $\cY$-valued stationary process $\{\xi_n:\,n\in\bbZ\}$. Consider the situation when 
$\cA_z^\om$ depends only on 
the $0$-th coordinate of $\om$. Then  by (\ref{ExpConvAss}), when $z$ lies in some deterministic neighborhood of the origin, the functional
$\nu_\om(z)$ can be 
approximated in $L^\infty$ exponentially fast in $n$ by functions of the first $n$-th coordinates. Indeed, since $h_\om(0)=\textbf{1} $, $h_\om(z)$ is analytic and $\sup_{z\in B(0,\rho)}\|h_\om(z)\|_1$ is bounded, we derive that there exist constants $r_0>0$ and $\del_0>0$ so that $P$-a.s. for any $z\in B(0,r_0)$ we have $\inf|h_\om(z)|>\del_0$. 
Since  $\nu_\om(z)\textbf{1}=1$, it follows  now from (\ref{ExpConvAss}) that for any $z\in B(0,r_0)$ and $g\in H_1^\om$ so that $\|g\|_1\leq1$,
\[
\Big\|\nu_{\om}(z)(g)-\frac{\cA^{\om,n}_zg}{\cA^{\om,n}_z \textbf{1}}\Big\|_1\leq A\del^n
\]
where we used that $\nu_{\om}(z)(g)=\frac{\nu_{\om}(z)(g)}{\nu_{\om}(z)\textbf{1}}$. Here $A>0$ is some constant and $\del$ comes from (\ref{ExpConvAss}). 
By (\ref{RPF}) we have $\la_\om(z)=\nu_\om(z)(\cA_z^\om \textbf{1})$, and therefore $P$-a.s. for any $z\in B(0,r_0)$
and $n\geq 1$ we have 
\[
|\la_\om(z)-q_{n,z}(\om_0,\om_1,...,\om_{n-1})|\leq C\del^n
\]
where $C>0$ is some constant, $\om=\{\om_s:\,s\in\bbZ\}$ and $q_{n,z}:\cY^n\to\bbC$ is a family of measurable functions which is uniformly bounded  in $\om_0,\om_1,...,\om_{n-1}$,  $z$ and $n$.
Since $\la_\om(0)=1$ and $\sup_{z\in B(0,\rho)}|\la_{\om}(z)|$ is a bounded random variable, there exist constants $a,b,\rho>0$ so that $P$-a.s. for any $z\in B(0,\rho)$ we have
$|\la_\om(z)|\in(a,b)$. Therefore, also $\Pi_{\om}(z)$ can be approximated exponentially fast in the above sense 
when $z$ belongs to a deterministic neighborhood of $0$.
Thus, using the identifications 
\[
\te^j\om=\{\xi_{s+j}:\,s\in\bbZ\},\,j\in\bbZ
\]
we derive that
for any $k$ there exists a constant $A_k$ and  a uniformly bounded family of functions $g_{k,n}:\cY^n\to\bbC$ so that for any $j\geq0$ and $n\geq1$,
\begin{equation}\label{ExpAppPress}
\left\|\Pi_{\te^j\om}^{(k)}(0)-g_{k,n}(\xi_j,\xi_{j+1},...,\xi_{j+n-1})\right\|_{L^\infty(\Om,\cF,P)}\leq A_k\del^n.
\end{equation}

Next, recall that the $\phi$-mixing (dependence) coefficients associated with the sequence $\{\xi_n: n\in\bbZ\}$ are given by 
\[
\phi(n)=\sup\big\{|P(B|A)-P(B)|: A\in\cF_{-\infty,k},\, B\in{\cF_{k+n,\infty}},\, k\in\bbZ,\, P(A)>0\big\}
\]
where $\cF_{-\infty,s}$ is the $\sig$-algebra generated by $\{\xi_l,\,l\leq s\}$ and $\cF_{s,\infty}$
is the $\sig$-algebra generated by $\{\xi_l: l\geq s\}$ (for any integer $s$).
Recall  (see \cite{Br}, Ch. 4) also that $\phi(n)$ can be written as
\begin{equation}\label{Phi-Rel-StPaper}
\phi(n)=\frac 12\sup\big\{\|\bbE[g|\cF_{-\infty,s}]-\bbE g\|_{\infty}\,:  g\in L^\infty(\Om,\cF_{s+n,\infty},P),\,
\|g\|_{\infty}\leq1,\,s\in\bbZ\big\}.
\end{equation}
We will obtain convergence rates towards the asymptotic moments under the assumption that
\[
S_\phi:=\sum_{n=1}^\infty\phi(n)<\infty.
\]
Under this assumption, the next step of the proof of (\ref{MomRateTemp}) is to approximate $\sum_{j=0}^{n-1}\Pi_{\te^j\om}^{(k)}(0)$ by martingales in the $L^\infty$ norm. The following arguments are classical and are given here for readers' convenience.
For each $m\geq0$, let $\cF_{0,m}$ be the $\sig$-algebra generated by $\xi_0,\xi_1,...,\xi_m$. 
For any $k\geq 2$ and $r\geq 1$ set 
\[
Y_j^{(k,r)}=g_{j,k}(\xi_j,...,\xi_{j+r-1})-\bbE g_{j,k}(\xi_j,...,\xi_{j+r-1}),\,j\geq0
\] 
and let  $M^{(r,k)}_n=\sum_{j=0}^{n-1}X_j^{(k,r)}$ be the martingale 
(w.r.t the filtration $\{\cF_{0,n-1+r}: n\geq 0\}$) whose differences are given by
\[
X_j^{(r,k)}=Y_j^{(k,r)}+\sum_{s\geq j+1}\bbE[Y_s^{(k,r)}|\xi_0,\xi_1,...,\xi_{j+r}]-
\sum_{s\geq j}\bbE[Y_s^{(k,r)}|\xi_0,\xi_1,...,\xi_{j-1+r}].
\]
Then by (\ref{Phi-Rel-StPaper}), similarly to the proof of Theorem 2.8 in \cite{Haf-MD},
we have $\|X_j^{(r,k)}\|_{L^\infty}\leq C(1+r+S_\phi)$ for any $j$, where $C$ is some constant, and, using also (\ref{ExpAppPress}) we derive that
\begin{eqnarray*}
\left\|\sum_{j=0}^{n-1}\Pi_{\te^j\om}^{(k)}(0)-n\int\Pi_{\om'}^{(k)}(0)dP(\om')-M_n^{(k,r)}\right\|_{L^\infty}\\
\leq\left\|\sum_{j=0}^{n-1}Y_j^{(k,r)}-M_n^{(k,r)}\right\|_{L^\infty}+nA_k\del^r\leq 
Q_k(1+r+S_\phi+n\del^r):=R
\end{eqnarray*}
where $Q_k$ is some constant. 
Applying the Azuma-Hoeffding inequality (see, for instance, page 33 in \cite{Mil})  with the martingales $M_n^{(k,r)}$ and $-M_n^{(k,r)}$, we obtain that
that for any $t\geq 0$,
\begin{eqnarray*}
P\big(\big|\sum_{j=0}^{n-1}\Pi_{\te^j\om}^{(k)}(0)-n\int\Pi_{\om'}^{(k)}(0)dP(\om')\big|\geq t+R\big)
\leq P(|M_n^{(k,r)}|\geq t)\leq P(M_n^{(k,r)}\geq t)\\+P(-M_n^{(k,r)}\geq t)
=P(e^{\la_t M_n^{(k,r)}}\geq e^{t\la_t})+(e^{\la_t\cdot(-M_n^{(k,r)})}\geq e^{t\la_t})\\
\leq e^{-t\la_t}\big(\bbE e^{\la_t M_n^{(k,r)}}+\bbE  e^{-\la_t M_n^{(k,r)}}\big)
\leq 2 e^{-t\la_t}e^{\la_t^2\sum_{j=0}^{m-1}\|X_j^{(k,r)}\|_{L^\infty}}
\leq 2e^{-\frac{t^2}{4nC(1+r+S_\phi)}}
\end{eqnarray*}
where  $\la_t=\frac{t}{2nC(1+r+S_\phi)}$, and in the third inequality we used the Markov inequality.
Taking $r=r_n$ of logarithmic order in $n$, we derive  that there exist constants $a_k,C_k,c_k,d_k>0$ so that for any $n\geq2$ and $\ve>0$,
\[
P\big\{\om: |\Pi_{\om,n}^{(k)}(0)-n\int \Pi_{\om'}^{(k)}(0)dP(\om')|\geq n\ve+a_k\ln n \big\}\leq C_ke^{-c_k\frac{\ve^2 n}{\ln n}}.
\]
By taking $\ve=u n^{-\frac12}\ln n$ for a sufficiently large $u$, we derive from the Borel Cantelli Lemma that for any $k$, $P$-a.s for any sufficiently large $n$ we have
\begin{equation}\label{P rate}
\left|\frac{1}{n}\Pi_{\om,n}^{(k)}(0)-\int \Pi_{\om'}^{(k)}(0)dP(\om')\right|\leq b_k n^{-\frac12}\ln n
\end{equation}
where $b_k>0$ is some constant which depends only on $k$.
It follows from (\ref{P rate}), (\ref{Rate even}) and (\ref{Rate odd}) that the convergence rate towards the 
asymptotic moments is at most of order $n^{-\frac12}\ln n$ (i.e. that (\ref{MomRateTemp}) holds true).

Next, the arguments in the proof of Theorem 7.1.1 in \cite{book}  show  that when $\sig^2>0$ then there exist random variables $c_\om$ and $d_\om$ so that $P$-a.s. for any $n\geq1$,
\begin{eqnarray*}
\sup_{s\in\bbR}\big|\mu_\om\{x\in\cE_\om: S_n^\om(x)\leq s\sqrt n\}-\frac1{\sqrt{2\pi\sig^2}}\int_{-\infty}^s e^{-\frac{t^2}{2\sig^2}}dt\big|\\\leq 
c_\om n^{-\frac12}+d_\om\big|\frac 1n\mu_\om(S_n^\om)^2-\sig^2\big|.
\end{eqnarray*}
Therefore, under the above mixing conditions we derive almost optimal convergence rate in the central limit theorem of the form
\[
\sup_{s\in\bbR}\left|\mu_\om\{x\in\cE_\om: S_n^\om(x)\leq s\sqrt n\}-\frac1{\sqrt{2\pi\sig^2}}\int_{-\infty}^s e^{-\frac{t^2}{2\sig^2}}dt\right|\leq 
q_\om n^{-\frac12}\ln n
\]
where $q_\om$ is some random variable (which in general may not even be integrable).




\begin{remark}
When $\{\xi_n:\,n\geq1\}$ is a stationary geometrically ergodic Markov chain then $\phi(n)$ converges exponentially fast to $0$ (see \cite{Br}, Theorem 21.1), and in particular $S_\phi<\infty$. Such exponential convergence occurs also when $\xi_n$ has form $\xi_n(x)=\xi_0\circ S^n(x)$, where $S$ is a two sided topologically mixing subshift of finite type, $x$ is distributed according to some  Gibbs measure (see \cite{Bow}) and $\xi_0$ is a function of a finite number of coordinates.
If $\{\zeta_n: n\geq0\}$ is a one sided stationary process then we can define $\xi_n=(\xi_{n,j})_{j=-\infty}^{\infty}=(...,\zeta_{n-1},\zeta_n,\zeta_{n+1},...)$, where $\zeta_n$ appears in the $0$-the coordinate. In these circumstances, we can write $\om=(\om_j)_{j\in\bbZ}$ and $\om_j=(\om_{j,k})_{k=-\infty}^\infty$.
When $\cA_z^\om$ depends only on $\om_{0,0}$ then all the arguments from this section hold true if we assume that the sequence $\phi_1(n),\,n\geq1$ given by 
\[
\phi_1(n)=\sup\big\{|P(B|A)-P(B)|: A\in\cG_{-\infty,k},\, B\in{\cG_{k+n,\infty}},\, k\in\bbN,\, P(A)>0\big\}
\]
is summable, where $\cG_{-\infty,s}$ is the $\sig$-algebra generated by $\{\zeta_0,\zeta_1,...,\zeta_s\}$ and $\cG_{s,\infty}$ is the $\sig$-algebra generated by $\{\zeta_s,\zeta_{s+1},...\}$. Note also that the above results hold true also when
 $\xi_n$ has the form $\xi_n(x)=\xi_0\circ T^n(x)$, where $T$ is a Young tower (see \cite{Young1} and \cite{Young2}) whose tails decay to $0$ sufficiently fast,  $x$ is distributed according to an appropriate Gibbs measure and $\xi_0$ is a bounded random variables which is measurable with respect to the partition which defines the tower. Young towers do not seem to be $\phi$-mixing with respect to the natural filtrations generated by cylinder sets, but they  satisfy certain type of ``graded $\phi$-mixing" conditions which allows use to obtain the results above (see Section 7 in \cite{Haf-Arrays} for several mixing properties of Young towers which involve certain $\phi$-mixing coefficients).
\end{remark}

\begin{remark}\label{RPF trip LimThms}
The argument above show that the RPF triplets $\la_\om(z), h_\om(z)$ and $\nu_\om(z)$ and all their derivatives with respect to $z$ can be approximated by functions of the random variables $\xi_0,\xi_1,...,\xi_n$ (or $\xi_{-n},...,\xi_0$ in the case of $ h_\om(z)$) with an error term which converges to $0$ exponentially fast when $n\to\infty$. Therefore, applying classical results from probability theory (for stationary and mixing processes) we obtain that for any real $t$ with a sufficiently small absolute value, the partial sums $\sum_{j=0}^{n-1}X_{i,k}(\te^j\om)$ generated by the random variables $X_{1,k}(\om)=\la_\om^{(k)}(t)$,  $X_{2,k}(\om)=\big(h_\om^{(k)}(t)\big)(x)$ or $X_{3,k}(\om)=\big(\nu^{(k)}_\om(t)\big)(g)$ satisfy exponential concentration inequalities, as well as various limit theorems such as the CLT. Here we assume that $\cE_\om=\cX$ does not depend on $\om$, $x\in\cX$ and $g$ is a H\"older continuous function on $\cX$. 
\end{remark}


\section{Edgeworth type expansions: Proof of Theorem \ref{EdgeThm}}\label{EdgeSec}\setcounter{equation}{0}
The proof of Theorem \ref{EdgeThm} relies on classical arguments involving Fourier transforms which were used
 successfully in the i.i.d. case and in the deterministic case (i.e. when $|\Om|=1$), 
see, for instance, \cite{Feller}, \cite{Liverani} and references therein. Most of the arguments in the proof of Theorem \ref{EdgeThm} are modifications of the arguments in \cite{Liverani}. When the arguments are exactly as in \cite{Liverani}, we will just refer the reader's  there. The main difficulty is to verify Assumptions \ref{BaseAss}, \ref{A3} and \ref{A4}, and this is postponed to Section \ref{secEXM}.

For reader's convenience, we first will describe the main idea behind using Fourier transforms in order to derive Edgeworth expansions. 
Recall first that by the
Esseen inequality (see \cite{Lin}, Ch. 4.1), there exists an absolute constant $A$ so that for any  $T>0$, a distribution function $F$ and an integrable function $G:\bbR\to\bbR$ with bounded first derivative such that $\lim_{x\to\infty} G(x)=0$,
\begin{equation}\label{Esseen}
\sup_{x\in\bbR}|F(x)-G(x)|\leq \int_{-T}^T\Big|\frac{f(t)-g(t)}{t}\Big|dt+\frac{A}{T}\sup_{x\in\bbR}|G'(x)|
\end{equation}
where $f(t)=\int e^{itx}dF(x)$ and $g(t)=\int e^{itx}dG(x)=\hat{G'}(-t)$ (here $\hat g$ is the Fourier transform of  a function $g$).  Let $F(s)=\mu_\om\{x:\in\cE_\om: S_n^\om\leq\sqrt n s\}$. The idea behind the proof is to find a
function $H$ so that $\frac{|\hat F(t)-H(t)|}{|t|}$
will be sufficiently small on some interval $[-T,T]$, where 
$T$ is of order $n^{-\frac{d}{2}}$. In this case we could take $G(s)=\int_{-\infty}^s\hat H(x)dx$ (so $\hat G'(-t)=H(t)$) and it will remain to verify that $G$ has the desired form.
On intervals of the form $I_n(\del)=[-\del\sqrt n,\del\sqrt n]$ for small $\del$'s this approximation is done using Assumption \ref{BaseAss}, while on 
$J_n(\del,a)=[-a\sqrt n, a\sqrt n]\setminus I_n(\del)$ (for large $a$'s) we will use Assumption \ref{A3}. When $d>1$ we will use Assumption \ref{A4} in order to obtain an appropriate estimate on $[-T,T]\setminus J_n(a,\del)$.



Let $d\geq1$. The first step of the proof of Theorem \ref{EdgeThm} is as follows. The arguments preceding (\ref{II}) show that for any $z\in B(0,\frac \rho 2)$ we can write
\begin{equation}\label{II.1}
\mu_\om(e^{zS_n^\om})=e^{\sum_{j=0}^{n-1}\Pi_{\te^j\om}(z)}W_{\te^n\om}(z)+\la_{\om,n}(z)\del_{\om,n}(z)
\end{equation}
where $W_{\om}(z)=\nu_{\om}(z)(h_{\om}(z))$, $d_{\om,n}(0)=0$
and $|\del_{\om,n}^{(k)}(z)|\leq C_k\del^n$. Since $\del_{\om,n}$ is analytic and $\del_{\om,n}(0)=0$, it follows that $|\del_{\om,n}(z)|\leq C'_0|z|\del^n$ for $z$'s in the above domain, where $C_0'$ is some constant. Moreover, it follows from Assumption \ref{A3} (i) together with (\ref{ExpConvAss}) that $P$-a.s. we have 
$\sup\{|\la_{\om,n}(it)|:\,t\in(-r,r),n\geq1\}\leq C_0''c_\om$, where $c_\om$ is some random variable. Therefore,
$P$-a.s. for any $|t|<\frac \rho 2$ and $n\geq1$,
\begin{equation}\label{First}
|\la_{\om,n}(it)\del_{\om,n}(it)|\leq C_\om|t|\del^n
\end{equation}
for some random variable $C_\om$.
Next, since $\Pi_{\om}(0)=\Pi_\om'(0)=0$ and all the derivatives of $\Pi_\om(z)$ are uniformly bounded in $\om$, where $\om$ ranges over a set of probability $1$,
we can write
\[
\sum_{j=0}^{n-1}\Pi_{\te^j\om}(\frac {it}{\sqrt n})=-\frac{t^2\Pi_{\om,n,2}}{2}+n\psi_{\om,n}(\frac {t}{\sqrt n})
\] 
where 
\[
\Pi_{\om,n,2}=n^{-1}\sum_{j=0}^{n-1}\Pi_{\te^j\om}^{(2)}(0)\,\,\text{ and }\,\,\psi_{\om,n}(z)=\frac 1n\sum_{j=0}^{n-1}(\Pi_{\te^j\om}(iz)-\frac{(iz)^2}2\Pi_{\te^j\om}''(0)).
\] 
By (\ref{II.1}) and (\ref{First}) we have for $P$-a.a. $\om$, 
\begin{equation}\label{II.2}
\mu_\om(e^{\frac{itS_n^\om}{\sqrt n}})=\exp\big(-\frac{t^2\Pi_{\om,n,2}}{2}+n\psi_{\om,n}(\frac{t}{\sqrt n})\big)W_{\te^n\om}(\frac{it}{\sqrt n})
+|t|o(n^{-\frac{d}{2}}).
\end{equation} 
Let $k\geq0$. Then, by Theorem \ref{EdgeThm} (i),  there exists a constant $B_k>0$ so that $P$-a.s. we have $\sup_{z\in B(0,\rho)}|\psi_{\om,n}(z)|\leq B_k$ for any $n\geq1$. Observe also that $\psi_{\om,n}(0)=\psi_{\om,n}'(0)=\psi_{\om,n}''(0)=0$. Therefore we can write $\psi_{\om,n}(z)=z^2\psi_{\om,n,d}(z)+|z|^{d+2}\tilde\psi_{\om,n,d}(z)$, where $\psi_{\om,n,d}$ is a polynomial of degree $d$ and $\tilde\psi_{\om,n,d}$ is an analytic function which vanishes at $z=0$ and is bounded in some deterministic neighborhood of the origin by some constant which does not depend on $\om$ and $n$. Using the latter notations, we can write 
\[
\exp\big(n\psi_{\om,n}(\frac{it}{\sqrt n})\big)=\exp\left(t^2\psi_{\om,n,r}(\frac{t}{\sqrt n})+
n^{-\frac d2}t^{d+2}\tilde\psi_{\om,n,d}(\frac{t}{\sqrt n})\right).
\]
Since $\mu_\om(h_\om(0))$, $\mu_\om$ is a probability measure, $h_\om(z)$ is analytic in $z$ and $\sup_{z\in B(0,\rho)}\|h_\om(z)\|_1$ is a bounded random variable, there exists a constant $0<r_0<\frac \rho 2$ so that $P$-a.s. we can develop a branch of $\ln W_{\om}(z)$ in $B(0,r_0)$ which is analytic, uniformly bounded in $\om$ (when $\om$ ranges over a set of probability $1$) and takes the value $\ln W_\om(0)=0$ at $z=0$. 
Next, we can write $W_{\om}(it)=1+W_{\om,d}(t)+t^d\tilde W_{\om,d}(t)$, where $W_{\om,d}(t)$ is a polynomial of degree $d$ which vanishes at $t=0$, whose coefficients are bounded random variables,
and $\tilde W_{\om,d}(t)$ is a $C^\infty$ function which vanishes at $t=0$, 
whose derivatives are uniformly bounded in $\om$ (around $0$). Similarly to the derivation of equality  (3.7) in \cite{Liverani}, 
and using the Taylor expansions of the functions $\exp(\cdot)$ and $\ln(\cdot)$ we derive that $P$-a.s. for any 
$t\in(- n^{\frac 12}r_0,n^{\frac 12}r_0)$,

\begin{eqnarray}\label{(3.7)}
\exp\big(\frac{t^2\Pi_{\om,n,2}}{2}+n\psi_{\om,n}(\frac{it}{\sqrt n})\big)W_{\te^n\om}(\frac{it}{\sqrt n})\\=
\exp\big(\frac{t^2\Pi_{\om,n,2}}{2}+n\psi_{\om,n}(\frac{it}{\sqrt n})+\ln e^{W_{\te^n\om}(\frac{it}{\sqrt n})}\big)\nonumber\\=
\exp\Big(t^2\psi_{\om,n,d}(\frac{it}{\sqrt n})+\frac1{n^{d/2}}t^{d+2}\tilde\psi_{\om,n,d}(\frac{it}{\sqrt n})-\nonumber\\\sum_{k=1}^{d}\frac{(-1)^{k+1}}{k}\big(W_{\om,d}(\frac{it}{\sqrt n})\big)^k-\frac1{n^{d/2}}t^{d}\overline {W}_{\om,d}(\frac{it}{\sqrt n})\Big)\nonumber\\=
1+\sum_{m=1}^d\frac1{m!}\Big(t^2\psi_{\om,n,d}(\frac{it}{\sqrt n})-\sum_{k=1}^d \frac{(-1)^{k+1}}{k}\big(W_{\om,d}(\frac{it}{\sqrt n})\big)^k\Big)^m \nonumber\\+\frac1{n^{d/2}}t^{d+2}\tilde\psi_{\om,n,d}(\frac{it}{\sqrt n})-\frac1{n^{d/2}}t^{d}\overline{W}_{\om,d}(\frac{it}{\sqrt n})+t^{d+1}\cO(n^{-\frac{d+1}{2}})\nonumber\\=
\sum_{k=0}^d n^{-\frac k2}A_{\om,n,k}(t)+
t^d n^{-\frac d2}\varphi_{\om,n}(\frac{t}{\sqrt n})+t^{d+1}\cO(n^{-\frac{d+1}{2}}).\nonumber
\end{eqnarray}
Here $A_0\equiv1$ and all the other $A_{\om,n,k}$'s are polynomials of degree $s_k$ which does not depend on $\om$ and $n$, whose coefficients are algebraic combinations of the  derivatives of $W_{\om,n}$ at $0$ and the derivatives of $\psi_{\om,n}$ at $0$, and $\varphi_{\om,n}(z)=z^2\tilde\psi_{\om,n,d}(z)-\overline{W}_{\om,n,d}(z)$, where $\overline{W}_{\om,n,d}(z)$ is the reminder of $\ln W_{\om,n}(iz)$ when approximated by powers of $W_{\om,n,d}(iz)$. 
Note that $\varphi_{\om,n}(z)$ is analytic, that it vanishes at $z=0$ and that $\sup_{a\in B(0,r_1)}|\varphi_{\om,n}(z)|\leq C_1$ ($P$-a.s.) for some positive constants $r_1$ and $C_1$ which do not depend on $\om$.
Set
\[
Q_{\om,n}(t)=\sum_{k=1}^d n^{-\frac k2}A_{\om,n,k}(t).
\]
Since $\Pi_{\om,n,2}$ converges to $\sig^2>0$ as $n\to\infty$,
it follows from (\ref{(3.7)}) that for any sufficiently small $\del_0$, $P$-a.s. we have
\begin{eqnarray}\label{(3.10)}
\int_{-\del_0\sqrt n}^{\del_0\sqrt n}\Big|\frac{\la_{\om,n}(\frac{it}{\sqrt n})W_{\te^n\om}(\frac{it}{\sqrt n})-e^{-\frac{t^2\Pi_{\om,n,2}}{2}}(1+Q_{\om,n})(t)}t\Big|dt\\=
\int_{-\del_0\sqrt n}^{\del_0\sqrt n}e^{-\frac{t^2\Pi_{\om,n,2}}{2}}\Big|\frac{\exp\big(n\psi_{\om,n}(\frac{it}{\sqrt n})+\ln W_{\te^n\om}(\frac{it}{\sqrt n})\big)-1-Q_{\om,n}(t)}{t}\Big|dt=o(n^{-\frac d2}).\nonumber
\end{eqnarray}
Combining this with (\ref{II.2}), we derive that for any sufficiently small $\del_0>0$ for $P$-a.a. $\om$,
\begin{equation}\label{3.12}
\int_{-\del_0\sqrt n}^{\del_0\sqrt n}\Big|\frac{\mu_\om(e^{\frac{it S_n^\om}{\sqrt n}})-e^{-\frac{t^2\Pi_{\om,n,2}}{2}}(1+Q_{\om,n}(t))}{t}\Big| dt=o(n^{-\frac d2}).
\end{equation}
Consider the functions $g_{\om,n}(t)=e^{-\frac{t^2\Pi_{\om,n,2}}{2}}(1+Q_{\om,n}(t))$. 
Using the fact that for any $c>0$, the function $\sqrt{2\pi c}(it)^ke^{-\frac{t^2c}{2}}$ is the  Fourier transform of the function $e^{-\frac{t^2}{2c}}$, and using the integration by parts formula with integrals of the form 
$\int x^je^{-cx^2}dx$,
we derive that 
\begin{equation}\label{Finding Poly1}
\sqrt{2\pi \Pi_{\om,n,2}}g_{\om,n}=\textbf{F}\Big(\int_{-\infty}^te^{-\frac{x^2}{2\Pi_{\om,n,2}}}dx+\sum_{k=1}^d n^{-\frac{k}{2}}R_{\om,n}(t)e^{-\frac{t^2}{2\Pi_{\om,n,2}}}\Big)
\end{equation}
where $\textbf{F}$ is the Fourier transform operator and
 all the $R_{\om,n,k}$'s are polynomials whose coefficients
are linear combinations of the coefficients of the $A_{\om,n,k}$'s, and the degree of $R_{\om,n,k}$ depends only on $k$. Define $G_{\om,n}$ by $G_{\om,n}(t)=\int_{-\infty}^t\textbf F\big(g_{\om,n}\big)(-\xi)d\xi$. Then by the Fourier inversion theorem, $\textbf{F}(G'_{\om,n})=g_{\om,n}$ and $\xi\to\sqrt{2\pi \Pi_{\om,n,2}}\textbf{F}(g_{\om,n})(-\xi)$ is the function inside the Fourier transform on the right hand side of (\ref{Finding Poly1}). 
Using also the integration by parts formula with integrals of the form $\int t^je^{-ct^2}dt=\int t^{j-1}te^{-ct^2}dt$, we derive that
\begin{equation}\label{Finding Poly1.1}
\sqrt{2\pi \Pi_{\om,n,2}}G_{\om,n}(t)=\int_{-\infty}^te^{-\frac{x^2}{2\Pi_{\om,n,2}}}dx+\sum_{k=1}^d n^{-\frac{k}{2}}P_{\om,k,n}(t)e^{-\frac{t^2}{2\Pi_{\om,n,2}}}
\end{equation}
where all the $P_{\om,n,k}$'s are polynomials whose coefficients
are linear combinations of the coefficients of the $A_{\om,n,k}$'s, and the degree of $P_{\om,n,k}$ depends only on $k$.

Next, applying (\ref{Esseen}) with  $F=F_{\om,n}(s)=\mu_\om\{x\in\cE_\om: S_n^\om\leq\sqrt n s\}$ and $G=G_{\om,n}$,  taking into account (\ref{3.12}), we obtain that for any $\ve>0$ and $B>\frac{A}\ve$, where $A$ comes from (\ref{Esseen}),
\begin{eqnarray*}
\sup_{s\in\bbR}|F_{\om,n}(s)-G_{\om,s}(s)|\leq \int_{-Bn^{\frac d2}}^{Bn^{\frac d2}}\Big|\frac{\mu_\om(e^{\frac{it S_n^\om}{\sqrt n}})-e^{-\frac{t^2\Pi_{\om,n,2}}{2}}(1+Q_{\om,n}(t))}t\Big|+\frac{A}{Bn^{\frac d2}}\\\leq I_1+I_2+I_3+\ve n^{-\frac d2}
\end{eqnarray*}
where 
\begin{eqnarray*}
I_1=\int_{-\del_0\sqrt n}^{\del_0\sqrt n}\Big|\frac{\mu_\om(e^{\frac{it S_n^\om}{\sqrt n}})-e^{-\frac{t^2\Pi_{\om,n,2}}{2}}(1+Q_{\om,n}(t))}t\Big|dt=o(n^{-\frac d2}),\\
I_2=\int_{\del_0\sqrt n<|t|<Bn^{\frac d2}}\Big|\frac{\mu_\om(e^{\frac{itS_n^\om}{\sqrt n}})}t\Big|dt\,\,\text{ and }\,\,\\
I_3=\int_{|t|>\sqrt n\del_0}e^{-\frac{t^2\Pi_{\om,n,2}}2}\big|t^{-1}(1+Q_{\om,n}(t))\big|dt.
\end{eqnarray*}
We note that the polynomials $P_{\om,k,n}$ appearing in the definition of $G$ depend on $n$.
Therefore, in principle, one cannot apply (\ref{Esseen}) directly. One can do it when $G'_{\om,n}$ is uniformly bounded in $n$ and $\lim_{x\to\infty}\sup_{n\geq1}|G_{\om,n}(x)|=0$ (these conditions are clearly satisfied in our circumstances).
In (\ref{3.12}) we have shown that $I_1=o(n^{-\frac d2})$. Since $\Pi_{\om,n,2}$ converges to $\sig^2>0$ and the coefficients of all $A_{\om,k}$'s in the definition of $Q_{\om,n}$ are bounded in $n$, it follows that $I_3=\cO(e^{-cn})$ for some $c>0$. In order to complete the proof of Theorem \ref{EdgeThm}, it is sufficient to show that $I_2=o(n^{-\frac d2})$. Recall that 
\[
\mu_\om(e^{it S_n^\om})=\mu_{\te^n\om}(\cA_{it}^{\om,n}\textbf{1})
\]
for any $t\in\bbR$.
In the case when $d=1$, using Assumption \ref{A3} with $r=\frac d2$ and the compact set 
$J=\{t\in\bbR: \del_0\leq |t|\leq B\}$ we obtain that 
\[
I_2\leq \frac{1}{\sqrt n\del_0}\sup_{t\in J}\|\cA_{it}^{\om,n}\|_1=\cO(n^{-\frac d2})
\]
for some $c>0$, which completes the proof of Theorem \ref{EdgeThm} (i). When $d>1$, in order to complete the proof of Theorem \ref{EdgeThm} (ii), it remains to 
show that, $P$-a.s. for any sufficiently large $C>0$ 
\[
\int_{C\sqrt n<|t|<Bn^{\frac d2}}\Big|\frac{\mu_\om(e^{\frac{itS_n^\om}{\sqrt n}})}t\Big|dt=o(n^{-\frac d2}).
\]
This follows from the following lemma, together with the estimate 
$|\mu_\om(e^{itS_n^\om})|\leq\|\cA_{it}^{\om,n}\|_2$, where $\|\cdot\|_2$ is the norm specified in Assumption \ref{A4}.
\begin{lemma}
Suppose that Assumption \ref{A4} holds true with $r=\frac{d}2$ and that $(\Om,\cF,P,\te)$ is ergodic. Then for any $q>0$ there exist positive constants $K,D$ and $L$ so that $P$-a.s. for any sufficiently large $n$, and $t$ such that $K\leq |t|\leq Dn^{r}$,
\[
\|\cA_{it}^{\om,n}\|_2\leq LR_\om n^{-q}
\]
where $R_\om$ is the random variable specified in Assumption \ref{A4}.
\end{lemma}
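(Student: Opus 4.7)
The idea is to boost the random exponent $r_2(\om)$ from Assumption \ref{A4}(ii) up to an arbitrary deterministic exponent $q$ by iterating the estimate along the orbit of $\te$ using the ergodic theorem. I would first fix deterministic constants $K_0,D_0,C_0,N_0>0$ and $r^*>0$ so that the set
\begin{equation*}
A=\{\om:\,K_\om\leq K_0,\,D_\om\geq D_0,\,C_\om\leq C_0,\,r_2(\om)\geq r^*,\,N(\om)\leq N_0\}
\end{equation*}
has $P(A)>0$; this is possible since the relevant random variables are a.s.\ finite and positive. Given $q>0$, pick an integer $M$ with $r^*M>q+r$, and set $K:=K_0$ and $D:=D_0(4M)^{-r}$.

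Next I would apply Birkhoff's theorem for the ergodic system $(\Om,\cF,P,\te)$ to the indicator $\mathbf{1}_A$: for $P$-a.e.\ $\om$, the counting function $G_s(\om):=\sum_{j=0}^{s-1}\mathbf{1}_A(\te^j\om)$ satisfies $G_s/s\to P(A)$. Consequently, for each $i\in\{1,\ldots,M\}$ the number of good times (meaning $j$ with $\te^j\om\in A$) in the interval $I_i:=[(i-\tfrac34)n/M,\,(i-\tfrac14)n/M)$ equals $G_{\lfloor (i-1/4)n/M\rfloor}-G_{\lfloor (i-3/4)n/M\rfloor}=P(A)n/(2M)+o(n)$, which exceeds $1$ once $n$ is past an $\om$-dependent threshold. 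Selecting a good $j_i\in I_i$ yields $j_1<\cdots<j_M$ with gaps $m_i:=j_{i+1}-j_i\geq n/(2M)$ for $i<M$ and $m_M:=n-j_M\geq n/(4M)$. I would then decompose
\begin{equation*}
\cA_{it}^{\om,n}=\cA_{it}^{\te^{j_M}\om,\,m_M}\circ\cA_{it}^{\te^{j_{M-1}}\om,\,m_{M-1}}\circ\cdots\circ\cA_{it}^{\te^{j_1}\om,\,m_1}\circ\cA_{it}^{\om,\,j_1},
\end{equation*}
bound the rightmost (initial) factor by $R_\om(1+|t|)$ using Assumption \ref{A4}(i), and apply Assumption \ref{A4}(ii) to each of the $M$ middle factors: they start at good times, have length $m_i\geq n/(4M)\geq N_0$ for $n$ large, and the constraint $|t|\leq Dn^r\leq D_0 m_i^r\leq D_{\te^{j_i}\om}m_i^r$ holds by the choice of $D$, giving $\|\cA_{it}^{\te^{j_i}\om,m_i}\|_2\leq C_0 m_i^{-r^*}\leq C_0(4M/n)^{r^*}$.

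Multiplying these bounds yields
\begin{equation*}
\|\cA_{it}^{\om,n}\|_2\leq R_\om(1+Dn^r)\cdot C_0^M(4M/n)^{r^*M}\leq L\,R_\om\,n^{r-r^*M}\leq L\,R_\om\,n^{-q}
\end{equation*}
for a deterministic constant $L$ depending only on $q,C_0,D_0,r,r^*,M$, which is the claim. The main obstacle I anticipate is precisely the step of guaranteeing that each $I_i$ contains a good time for $P$-a.e.\ $\om$ and large $n$ — this is where ergodicity of $\te$ is used essentially, and it reduces to extracting enough local control from $G_s=sP(A)+o(s)$ to ensure that a sub-interval of length $n/(2M)$ carries at least one good time. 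A minor bookkeeping point worth noting is that this scheme produces only a single $(1+|t|)$ factor (from the outer leg) rather than one per block, so the $n^r$ growth it contributes is comfortably absorbed by the surplus $r^*M-r>q$ in the decay exponent.
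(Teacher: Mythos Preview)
Your proposal is correct and follows essentially the same strategy as the paper: fix a good set of positive measure on which the random constants in Assumption \ref{A4}(ii) are uniformly controlled, use ergodicity to locate $M$ good times along the orbit with gaps comparable to $n$, decompose $\cA_{it}^{\om,n}$ accordingly, absorb the initial leg via the linear bound $R_\om(1+|t|)$ from part (i), and multiply $M$ copies of the $n^{-r^*}$ decay to beat $n^{r}$ by any prescribed margin. The only cosmetic differences are that the paper phrases the ergodic input via the sequence of return times to the good set (with $m_k/k\to 1/p_0$) and places the good times in dyadically shrinking windows, whereas you use Birkhoff on the counting function and equal-length windows; your explicit choice $D=D_0(4M)^{-r}$ also handles the compatibility $|t|\le D_0 m_i^{r}$ a bit more transparently than the paper does.
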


\begin{proof}
Set $r=\frac d2>0$ let $K_\om,D_\om,C_\om$, $r_2(\om)$, $R_\om$  and $N(\om)$ be the random variables specified in Assumption \ref{A4} with this specific $r$. Let $K,D,r_2,C$ and $N$ be positive numbers so that the set 
\[
\Gam=\{\om\in\Om:\,K_\om\leq K,\,D_\om\geq D,\,r_2(\om)\geq r_2,\,C_\om\leq C\,\text{ and }\,N(\om)\leq N\}
\]
has positive probability. Set $p_0=P(\Gam)$. By Birkhoff's ergodic theorem and Kac's lemma applied with the transformation induced on $\Gamma$ by $\te$, we obtain that  $P$-a.s. there exists a sequence $m_1<m_2<m_3<...$ of positive integers so that 
\[
\te^{m_i}\om\in\Gam\,\text{ for all }\,i's\,\text{ and }\,\lim_{k\to\infty}\frac{m_k}{k}=\frac1{p_0}.
\]
It follows that for any $0<a<b<1$ and a sufficiently large $n$ there exists an index $k=k_{\om,n,a,b}$ so that 
$an<m_k<bn$. Indeed, let $1>\ve>0$. Then for any sufficiently large $k$ we have 
$\frac{1-\ve}{p_0}k<m_k<\frac{1+\ve}{p_0}k$. Suppose that $\ve$ is sufficiently small and let
 $\del>0$ be so that $ap_0<\del(1-\ve)<\del(1+\ve)<bp_0$. Let 
$(k_n)_{n=1}^\infty$ be a strictly increasing sequence of positive integers so that $k_n/n$ converges to $\del$ as $n\to\infty$. Then for any sufficiently large $n$ we have 
\[
an<\frac{1-\ve}{p_0}k_n<m_{k_n}<\frac{1+\ve}{p_0}k_n<bn.
\]

Next, let $q>0$ and let $m$ be a positive integer such that $mr_2>r+q$. For any sufficiently large $n$ we can find $k_1(n),k_2(n),...,k_m(n)$ so that for each $i$,
\[
\big(\sum_{j=1}^{i}2^{-j}+2^{-i-2}\big)n>m_{k_i(n)}>\big(\sum_{j=1}^{i}2^{-j}\big)n.
\]
Set $\om_i=\te^{m_i}\om$, $\Del_i(n)=k_{i+1}(n)-k_{i}(n)$, $i=1,2,...,m-2$, $\Del_0(n)=k_1(n)-m_1$, 
$\Del_{m-1}(n)=n-k_{m-1}(n)$ and $k_0(n)=1$. Then there exists a constant $c>0$ which depends only 
on $m$ so that 
\[
\Del_i(n)\geq cn\,\,\text{ for any }\,i=0,1,...,m-1.
\]
Now, $P$-a.s for 
any large enough $n$ we can write for all $t$'s,
\[
 \cA_{it}^{\om,n}=\cA_{it}^{\om_{k_{m-1}(n)},\Del_{m-1}(n)}\circ\cdots\circ\cA_{it}^{\om_{k_1(n)},\Del_1(n)}\circ\cA_{it}^{\om_{k_n(0)},\Del_0(n)}\circ\cA_{it}^{\om,m_1}.
\]
If $K\leq |t|\leq Dn^d$ then, since $\om_i\in\Gam$ for all $i$'s, for any sufficiently large $n$ and $0\leq i\leq m-1$ we have 
\[
\|\cA_{it}^{\om_{k_i(n)},\Del_i(n)}\|_2\leq C\big(\Del_i(n)\big)^{-r_2}\leq
Cc^{-r_2}n^{-r_2}.
\]
By Assumption \ref{A4} we have $\|\cA_{it}^{\om,m_1}\|_2\leq R_\om(1+|t|)$ which is less than 
$R_\om(1+ Dn^r)$ for $t$'s in the above range. We conclude from submultiplicity of norms of operators 
that $P$-a.s. for any sufficiently large $n$ 
and $t$ so that $K\leq |t|\leq Dn^r$ we have 
\[
\|\cA_{it}^{\om,n}\|_2\leq C^mc^{-mr_2}(1+D)R_\om n^{-mr_2+r}\leq 
 C^mc^{-mr_2}(1+D)R_\om n^{-q}:=LR_\om n^{-q}
\]
and the proof of the lemma is complete.
\end{proof}

Now we will explain how to derive Theorem \ref{EdgeThm} (iii). 
It follows from the definition of the $P_{\om,n,k}$'s that their coefficients are algebraic combinations of the derivatives at $z=0$
of the functions $\mu_\om(h_\om(z))$ and $n^{-1}\sum_{j=0}^{n-1}\Pi_{\te^j\om}(z)$.  
Since the derivatives of these functions $0$ are uniformly bounded in $\om$ and $n$ (when $\om$ ranges over a set of full probability), these coefficient are uniformly bounded in $\om$ and $n$, and the coefficients of $P_{\te^{-n}\om,n,k}$ converge $P$-a.s. and in $L^p$ for any $p\in[1,\infty)$. When 
 (\ref{P rate}) holds true then all the coefficients of the polynomials $P_{\te^{-n}\om,n,k}, k=1,2,...$ converge towards their limits with rate of order $n^{-\frac12}\ln n$.





\section{Examples}\setcounter{equation}{0}\label{secEXM}

\subsection{Random distance expanding maps}\label{SkewProdSec}
Let $(\Om,\cF,P,\te)$, $(\cX,\rho)$ and $\cE_\om$ be as described at the beginning of Section \ref{sec2}, 
and let
$
\{T_\om: \cE_\om\to \cE_{\te\om},\, \om\in\Om\}
$
be a collection of  maps between the metric spaces 
$\cE_\om$ and $\cE_{\te\om}$, so that
the map $(\om,x)\to T_\om x$ is measurable with respect to the $\sigma$-algebra $\cP$
which is the restriction of $\cF\times\cB$ on $\cE$.
Consider the 
skew product transformation $T:\cE\to\cE$ given by 
\begin{equation}\label{Skew product}
T(\om,x)=(\te\om,T_\om x).
\end{equation}
For any $\om\in\Om$ and $n\in\bbN$ consider the $n$-th step iterates $T_\om^n$ given by
\begin{equation}\label{T om n}
T_\om^n=T_{\te^{n-1}\om}\circ\cdots\circ T_{\te\om}\circ T_\om: \cE_\om\to\cE_{\te^n\om}.
\end{equation}
Our additional requirements concerning the family of maps $\{T_\om:\om\in\Om\}$
are collected in the following assumptions.
\begin{assumption}[Topological exactness]\label{TopExAssRand}
There exist a constant $\xi>0$ and a random variable $n_\om\in\bbN$ such that 
$P$-a.s.,
\begin{equation}\label{TopExRand}
T_\om^{n_\om}(B_\om(x,\xi))=
\cE_{\te^{n_\om}\om}\,\text{ for any } x\in\cE_\om
\end{equation}
where for any $\om\in\Om$, $x\in\cE_\om$ and $r>0$, 
$\,B_\om(x,r)$ denotes a ball in $\cE_\om$ around $x$ with radius $r$.
\end{assumption}

\begin{assumption}[The pairing property]\label{Ass pair prop}
There exist random variables $\gam_\om>1$ and $D_\om\in\bbN$ such that $P$-a.s.
for any $x,x'\in\cE_{\te\om}$ with $\rho(x,x')<\xi$ 
we can write 
\begin{equation}\label{Pair1.0}
T_\om^{-1}\{x\}=\{y_1,...,y_k\}\,\,\text{ and }\,\,T_\om^{-1}\{x'\}=\{y_1',...,y_k'\}
\end{equation}
where $\xi$ is specified in Assumption \ref{TopExAssRand},
\[
k=k_{\om,x}=|T_\om^{-1}\{x\}|\leq D_\om 
\]
and
\begin{equation}\label{Pair2.0}
\rho(y_i,y_i')\leq (\gam_\om)^{-1}\rho(x,x')
\end{equation}
for any $1\leq i\leq k$.
\end{assumption}
These assumptions hold true for the random distance expanding maps considered in \cite{MSU} since in the setup there the inverse branches are contracting, see Section 2.5 in \cite{MSU}.
We refer the readers to Section 2.1 in \cite{MSU} for a  description of several examples of such maps $T_\om$.

According to Lemma 4.11 in \cite{MSU} (applied with $r=\xi$), 
there exist an integer valued random variable $L_\om\geq1$ and $\cF$-measurable 
functions $\om\to x_{\om,i}\in\cX,\,i=1,2,3,...$ so that 
$x_{\om,i}\in\cE_\om$ for each $i$ and
\begin{eqnarray}\label{cover}
\bigcup_{k=1}^{L_\om} B_\om(x_{\om,k},\xi)=\cE_\om,
\,\,\,P\text{-a.s.}
\end{eqnarray} 
Note that $L_\om$ is constant in $\om$ when
$\cE_\om$ does not depend on $\om$ (i.e. when $\cE=\Om\times\cY$ for an appropriate
$\cY\subset\cX$).

Next, for any $g:\cE\to\bbC$ and $\om\in\Om$ consider 
the function $g_\om:\cE_\om\to\bbC$ given by $g_\om(x)=g(\om,x)$.
Let $\phi, u:\cE\to\bbR$ be measurable functions so that for $P$-a.a. $\om$ the functions 
$\phi_\om$ and $u_\om$ are H\"older continuous 
with exponent $\al\in(0,1]$ which is 
independent of $\om$.
Let $z\in\bbC$ and consider the transfer operators $\cL_z^\om,\,\om\in\Om$ which 
map functions on $\cE_\om$ to functions on $\cE_{\te\om}$ by
the formula\index{transfer operator!random!complex}
\begin{eqnarray}\label{TraOp}
\cL^\om_z g(x)=\sum_{y\in T_\om^{-1}\{x\}}e^{\phi_\om(y)+zu_\om(y)}g(y).
\end{eqnarray}
Note that under Assumption \ref{Ass pair prop} the operators 
$\cL^\om_z,\,z\in\bbC$ are well defined and since $\phi_\om$ and $u_\om$ are H\"older continuous
they map a continuous function on $\cE_\om$
to a continuous function on $\cE_{\te\om}$. 
For any $n\in\bbN$ and $z\in\bbC$ consider the $n$-th 
step iterates $\cL_z^{\om,n}$ of the transfer operator given by
\begin{equation}\label{iter}
\cL_z^{\om,n}=\cL_z^{\te^{n-1}\om}\circ\cdots\circ\cL_z^{\te\om}\circ\cL_z^\om.
\end{equation}
Then 
\begin{equation}\label{iter2}
\cL_z^{\om,n}g(x)=\sum_{y\in (T_\om^n)^{-1}\{x\}}e^{S_n^\om \phi(y)+zS_n^\om u(y)}g(y)
\end{equation} 
where
$
S_n^\om \psi=\sum_{i=0}^{n-1}\psi_{\te^i\om}\circ T_\om^i
$
for any function $\psi:\cE\to\bbC$, $\om\in\Om$ and $n\geq1$.
We also consider the (global) transfer operator $\cL_z$
acting on functions  $g:\cE\to\bbC$ by the formula 
\begin{equation}\label{TraOp1}
\cL_z g(s)=
\sum_{s'\in T^{-1}\{s\}=(\te^{-1}\om,y)}e^{\phi_{\te^{-1}\om}(y)+zu_{\te^{-1}\om}(y)}g(s')=
\cL^{\te^{-1}\om}_z g_{\te^{-1}\om}(x),\,\,\,s=(\om,x),
\end{equation}
namely $\cL_z$  is generated by the skew product map $T$ and the function
$\phi+zu$.
Next, let $g:\cE\to\bbC$ be a measurable function. Let 
$\al$ as described before (\ref{TraOp}) and set
\begin{eqnarray*}
v_{\al,\xi}(g_\om)=\inf\{R: |g_\om(x)-g_\om(x')|\leq R\rho^\al(x,x')\,\text{ if }\,
\rho(x,x')<\xi\}\\
\text{and }\,\,\,\|g_\om\|_1=\|g_\om\|_{\al,\xi}=\|g_\om\|_\infty+v_{\al,\xi}(g_\om)\hskip1cm
\end{eqnarray*}
where $\|\cdot\|_\infty$ is the supremum norm and $\rho^\al(x,x')=\big(\rho(x,x')\big)^\al$. 
These norms are $\cF$-measurable as a consequence
of Lemma 5.1.3 in \cite{book}. We denote by $\cH_{\al,\xi}^\om=H_1^\om$ the space of all functions 
$f:\cE_\om\to\bbC$ so that $\|f\|_{\al,\xi}<\infty$. 

Our additional requirements from $T_\om,\phi_\om$ and $u_\om$ are specified in the following
\begin{assumption}\label{bound ass}
(i) The random variables $n_\om,D_\om,L_\om$
are bounded and $\gam_\om-1$ is bounded from below by some positive
constant. 

(ii) For any measurable function $g:\cE\to\bbC$  so that $\|g_\om\|_{\al,\xi}\leq 1$ the map  $(\om,x)\to\cL_0 g(\om,x)$ is measurable.

(iii) For $P$-a.a. $\om$ we have $\phi_\om,u_\om\in \cH_\om^{\al,\xi}$  and
the random variables $\|\phi_\om\|_{\al,\xi}$ and $\|u_\om\|_{\al,\xi}$ are bounded.
\end{assumption}
Assumption \ref{bound ass} (i) holds true, for instance when $\cE_\om=\cX=[0,1)$ (and then $L_\om$ does not depend on $\om$), and each  $T_\om$ is a piecewise distance expanding map on the unit interval with a bounded in $\om$ number of monotonicity intervals (on which $T_\om$ is onto $[0,1)$). Several multidimensional examples such as the case when $T_\om x=(m_1(\om) x_1,...,m_d(\om)x_d)\,\text{mod } 1$ can be also be given (here $m_i(\om)\geq 2$ are integer valued and are bounded in $\om$). 
 We refer again the readers to Section 2.1 in \cite{MSU} for more examples of this nature. Different type of examples are random topologically mixing subshifts of finite type with bounded number of symbols, see \cite{Kifer-1998} for a description of such systems.
Note that under Assumption  \ref{bound ass} we have that $\cL_z^{\om,n}(\cH_\om^{\al,\xi})\subset\cH_{\te^n\om}^{\al,\xi}$
and the corresponding operator norm satisfies 
$\|\cL_{it}^{\om,n}\|_{\al,\xi}\leq B(1+|t|)$ where $B$ is some constant (see Lemma  5.6.1 in \cite{book}).

When the random variable $\|\phi_\om\|_{\al,\xi}$  is bounded then Assumption \ref{bound ass} (ii) 
holds true if for any set $A\subset\cX$ with a sufficiently small diameter the set 
\[
\textbf{A}=\{(\om,x): x\in T_\om(A\cap\cE_\om)\}
\]
is measurable (note that when $\cE_\om=\cX$ then $\textbf{A}=T(\Om\times A)$ and that $T_\om$ is injective on $\cE_\om\cap A$ if the diameter of $A$ is smaller than $\xi$). 
Indeed,  in these circumstances we can approximate (uniformly in $\om$) the function $g_\om\cdot e^{\phi_\om}$, by sums of the form $\sum_{i=1}^m a_i(\om)\bbI_{A_i\cap \cE_\om}$, where $\{A_i\}$ is a partition of $\cX$ into sets with small diameter, $\bbI_{A_i\cap \cE_\om}$ is the indicator set of $A_i\cap\cE_\om$ and $a_i(\om)$ are some random variables whose absolute value is bounded by $1$. Then $\cL_0$ can be approximated by
\[
\sum_{i=1}^m a_i(\om)\bbI(x\in T_\om(A_i\cap\cE_\om))
\]
which are measureable functions of $(\om,x)$. Observe that it is enough to assume that the $\textbf{A}$'s are measurbale for sets $A$ which are generated by finite collections of open balls using only the basic Boolean operations. For such sets  $\textbf{A}$ is measurable if $\Om$ is a topological space, $\cF$ is the Borel $\sig$-algebra, $\cE_\om=\cX$ and $T$ is continuous. Another example is the case when $T_\om$ takes only a countable number of ``values" $\{T_1,T_2,...\}$. We refer the readers to Chapter 4 in \cite{MSU} for more aspects of such measurability issues.

\subsubsection{Asymptotic moments: verification of Assumption \ref{BaseAss}}
We recall first the following random complex RPF theorem, which was stated in \cite{book} as Corollary 5.4.2.

\begin{theorem}\label{RPF rand T.O. general}
Suppose that Assumptions \ref{TopExAssRand}, \ref{Ass pair prop} and \ref{bound ass} hold true. Then there exists $\rho>0$ so that $P$-a.s. 
for any $z\in U:=\{\zeta\in\bbC: |\zeta|<\rho\}$ there is a unique triplet consisting of a
 nonzero complex number $\boldsymbol{\la}_\om(z)$ 
a function $\textbf{h}_\om(z)\in\cH^{\al,\xi}_\om$ and a continuous linear functional $\boldsymbol{\nu}_\om(z)$ 
on $\cH^{\al,\xi}_\om$ so that $\boldsymbol{\nu}_\om(z)\textbf{1}=1$, 
\begin{equation}\label{RPF1}
\cL_z^\om \textbf{h}_\om(z)=\boldsymbol{\la}_\om(z)\textbf{h}_{\te\om}(z),\,\,
(\cL_z^\om)^*\boldsymbol{\nu}_{\te\om}(z)=\boldsymbol{\la}_\om(z)\boldsymbol{\nu}_\om(z)\,\text{ and }\,
\boldsymbol{\nu}_\om(z)\big(\textbf{h}_\om(z)\big)=1.
\end{equation}
For any $z\in U$ the maps $\om\to\boldsymbol{\la}(z)$ and $(\om,x)\to \textbf{h}_\om(z)(x),\,(\om,x)\in\cE$
are measurable and the family $\boldsymbol{\nu}_\om(z)$ is measurable in $\om$. 
When $z=t\in\bbR$ then $\boldsymbol{\la}_\om(t)>0$, the function $\textbf{h}_\om(t)$ is strictly
positive, $\boldsymbol{\nu}_\om(t)$ is a probability measure and the equality 
$\boldsymbol{\nu}_{\te\om}(t)\big(\cL_t^\om g)=\boldsymbol{\la}_\om(t)\boldsymbol\nu_{\om}(t)(g)$ holds true for any 
bounded Borel function $g:\cE_\om\to\bbC$ and when the maps $T_\om$ satisfy the assumptions from 
\cite{MSU} then the triplet $(\boldsymbol{\la}_\om(t),\textbf{h}_\om(t),\boldsymbol{\nu}_\om(t))$ coincides with the one 
constructed there with the function $\phi+tu$.

Moreover, this triplet is analytic and uniformly bounded around $0$.
Namely, the maps 
\[
\boldsymbol\la_\om(\cdot):U\to\bbC, \textbf{h}_\om(\cdot):U\to \cH_\om^{\al,\xi}\,\text{ and }
\boldsymbol\nu_\om(\cdot):U\to \big(\cH_\om^{\al,\xi}\big)^*
\]
are analytic, where $(\cH_\om^{\al,\xi})^*$ is the dual space of $\cH_\om^{\al,\xi}$,
and there is a constant $C_0>0$ so that
\begin{equation}\label{UnifBound}
\max\Big(\sup_{z\in U}|\boldsymbol\la_\om(z)|,\, 
\sup_{z\in U}\|\textbf{h}_\om(z)\|_{\al,\xi},\, \sup_{z\in U}
\|\boldsymbol\nu_\om(z)\|_{\al,\xi}\Big)\leq C_0,\,\,P\text{-a.s.}
\end{equation}
where  $\|\nu\|_{\al,\xi}$ is the 
operator norm of a linear functional $\nu:\cH_\om^{\al,\xi}\to\bbC$.

Furthermore, there exists constants $C>0$ and $\del\in(0,1)$ so that $P$-a.s. for any $n\geq1$ and $g\in\cH^{\al,\xi}_\om$ we have
\begin{equation}\label{ExpConv}
\left\|(\boldsymbol\la_{\om,n}(z))^{-1}\cL_z^{\om,n}g-\boldsymbol\nu_\om(z)(g)\textbf{h}_{\te^n\om}(z)\right\|_{\al,\xi}\leq C\|g\|_{\al,\xi}\del^n.
\end{equation}
\end{theorem}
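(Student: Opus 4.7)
The plan is to reduce the statement to a complex cone contraction argument in the spirit of H.H.~Rugh \cite{Rug}, bootstrapping from the known real thermodynamic formalism for random distance expanding maps developed in \cite{MSU}. First I would establish the theorem for real $z=t$ in an interval $(-\rho_0,\rho_0)$: under Assumptions \ref{TopExAssRand}, \ref{Ass pair prop} and \ref{bound ass} the operators $\cL_t^\om$ preserve a real Birkhoff cone $\cC_\om^{\bbR}\subset \cH_\om^{\al,\xi}$ of strictly positive H\"older functions with a prescribed log-H\"older constant; topological exactness together with the pairing property forces, after $n_\om$ iterations, the image to have finite projective diameter inside $\cC_{\te^{n_\om}\om}^{\bbR}$, and the uniformity in $\om$ of the bounds on $n_\om,D_\om,L_\om$ and $(\gam_\om-1)^{-1}$ renders the contraction ratio of the induced Hilbert metric deterministic. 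Birkhoff's inequality then produces, by a standard limiting argument (pulling back the constant function $\textbf{1}$ along $\te^{-n}$), the real triplet $(\boldsymbol{\la}_\om(t),\textbf{h}_\om(t),\boldsymbol{\nu}_\om(t))$ together with its uniqueness, positivity, and coincidence with the MSU triplet.

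For complex $z$, I would fix a thin complex cone $\cC_\om\subset \cH_\om^{\al,\xi}$ built from $\cC_\om^{\bbR}$ in the sense of Rugh and show that there exists $\rho\in(0,\rho_0)$, independent of $\om$, such that for every $z\in U=B(0,\rho)$ and $P$-a.a.\ $\om$ the complex operator $\cL_z^\om$ maps $\cC_\om$ into $\cC_{\te\om}$ and, after a uniformly bounded number of steps, contracts the associated complex projective metric with a uniform ratio $\del\in(0,1)$. The admissible $\rho$ would come from controlling the perturbation $\cL_z^\om-\cL_0^\om$, whose operator norm is of order $|z|$ uniformly in $\om$ thanks to the boundedness of $\|u_\om\|_{\al,\xi}$ from Assumption \ref{bound ass} (iii). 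From the uniform contraction I would define
\[
\textbf{h}_\om(z)=\lim_{n\to\infty}\frac{\cL_z^{\te^{-n}\om,n}\textbf{1}}{\boldsymbol{\la}_{\te^{-n}\om,n}(z)},
\]
$\boldsymbol{\nu}_\om(z)$ as the corresponding limit of normalized dual iterates, and $\boldsymbol{\la}_\om(z)$ as the resulting eigenvalue; the exponential bound (\ref{ExpConv}) is then a direct restatement of the cone contraction, and uniqueness of the triplet under the stipulated normalizations follows from the same contraction.

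Analyticity of $z\mapsto \boldsymbol{\la}_\om(z),\textbf{h}_\om(z),\boldsymbol{\nu}_\om(z)$ on $U$ follows from the observations that each finite iterate $\cL_z^{\om,n}\textbf{1}$ is an entire function of $z$ with values in $\cH_{\te^n\om}^{\al,\xi}$, that the relevant normalizing quantities remain bounded away from $0$ on $U$ by continuity from $z=0$ (where $\boldsymbol{\la}_\om(0)=1$, $\textbf{h}_\om(0)=\textbf{1}$, $\boldsymbol{\nu}_\om(0)=\mu_\om$ are given by the real theory applied to the equilibrium state of $\phi_\om$), and that the cone contraction gives convergence uniformly on slightly smaller disks. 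Vitali's theorem then upgrades the pointwise limits to analytic ones, and the uniform bound (\ref{UnifBound}) follows from uniform control over the finite iterates. Measurability in $\om$ is routine: Assumption \ref{bound ass} (ii) gives measurability of $\cL_0$ (hence of $\cL_z$ via the analytic dependence on $z$), composition with the measurable $\te$ extends this to all iterates, and uniform limits preserve measurability.

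The main obstacle I anticipate is executing the complex cone contraction uniformly in $\om$: one must simultaneously ensure that $\cC_\om$ has a ``thickness'' in Rugh's sense that is uniformly positive in $\om$, that the perturbation $\cL_z^\om-\cL_0^\om$ is small enough to keep the image inside $\cC_{\te\om}$ for all $z\in U$, and that the number of iterations needed to bring the projective diameter below a fixed constant is uniformly bounded. All three reductions turn on converting the qualitative almost sure bounds in Assumption \ref{bound ass} into explicit quantitative estimates on the aperture and contraction parameters of the complex cones, which is the technical heart of the argument and is exactly what is carried out in Chapter 5 of \cite{book}; once these estimates are in place, everything else reduces to standard thermodynamic formalism plus Vitali.
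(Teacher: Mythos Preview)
Your proposal is correct and aligns with the paper's treatment: the paper does not prove this theorem in situ but simply recalls it as Corollary~5.4.2 of \cite{book} (with a brief remark that only Assumption~\ref{bound ass}(ii), rather than full measurability of $\cL_0$, is needed for the measurability claims). The argument you sketch---real Birkhoff cone contraction as in \cite{MSU}, then Rugh-type complex cone contraction with aperture and contraction parameters made uniform in $\om$ via Assumption~\ref{bound ass}, followed by Vitali for analyticity---is precisely the strategy carried out in Chapters~4--5 of \cite{book}, so your plan and the paper's cited proof coincide.
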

Note that in \cite{book} we have assumed that $\cL_0g$ is measurable  for any measurable function $g$, which was only needed in order to insure that the RPF triplets are measurable in $\om$, but taking a careful look at the arguments in Chapter 4 of \cite{book} we see that only Assumption \ref{bound ass} (ii) is needed.

We remark that the random  (Gibbs) measure $\mu_\om=\textbf{h}_\om(0)\boldsymbol{\nu}_\om(0)$ satisfies 
$(T_\om)_*\mu_\om=\mu_{\te\om}$, or, equivalently,
$\mu=\int_{\Om}\mu_\om dP(\om)$ is $T$-invariant, where $T(\om,x)=(\te\om,T_\om x)$ is the skew product map.
Consider now the transfer operator
operator $\cA_z^\om$ generated by the map $T_\om$ and the function $\tilde\phi_\om+zu_\om$, where $\tilde\phi_\om=\phi_\om+\ln \textbf{h}_\om(0)-\ln \textbf{h}_{\te\om}(0)\circ T_\om-\ln\boldsymbol{\la}_{\om}(0)$. Then $\cA_z^\om\textbf{1}=\textbf{1}$,
\[
\mu_\om(e^{it S_n^\om u})=\mu_{\te^n\om}(\cA_{it}^{\om,n}\textbf{1}),
\]
 $\cA_z^{\om,n}(\cH_\om^{\al,\xi})\subset\cH_{\te^n\om}^{\al,\xi}$
and $\cA_z^\om$ also satisfies the above random complex RPF theorem. More precisely,
Assumption \ref{A3} is satisfied with the norm $\|\cdot\|_1=\|\cdot\|_{\al,\xi}$ and triplets 
$(\la_\om(z),h_\om(z),\nu_\om(z))$ which also satisfy $\la_\om(0)=1$, $h_\om(0)\equiv\textbf{1}$ and $\nu_\om(0)=\mu_\om$.   We refer the readers to the begging of Section 4 in \cite{SeqRPF} for the precise details.
Since $\|\cA_{it}^{\om,n}\textbf{1}\|_{\al,\xi} \leq C\|\cL_{it}^{\om,n}\textbf{1}\|_{\al,\xi}$ for some constant $C>0$ (see \cite{book}, Ch. 5), it is sufficient to show that $\cL_{z}^\om$ satisfy Assumption \ref{A3} with the above norm. 
Henceforth, we will always assume that $\mu_\om(u_\om)=0$ for $P$-a.a. $\om$, which, due to $T$-invariance of $\mu$, implies that $\mu_\om(S_n^\om u)=0$ for each $n\geq1$. This is not really a restriction since we can always replace $u_\om$ by $u_\om-\int u_\om(x)d\mu(x)$.

\subsubsection{Edgeworth expansions of order one: verification of Assumption \ref{A3}}

The first part of Assumption \ref{A3} is satisfied due to the random Lasota-Yorke type inequality stated in 
Lemma 5.6.1 in \cite{book}.
Next, under the following assumption we showed in Chapter 7 of \cite{book} that the rest of Assumption \ref{A3} holds true for the operators $\cL_z^\om$.

\begin{assumption}\label{PerPointAss}
(i)  $\Om$ is a topological space, $\cF$ is the corresponding 
Borel $\sig$-algebra, $(\Om,\cF,P,\te)$ is ergodic and $\te$ has a periodic point, namely there exist $\om_0\in\Om$ and $n_0\in\bbN$ so that $\te^{n_0}\om_0=\om_0$. Moreover, $P(U)>0$ for any open set $U$ which intersects the finite orbit $\{\te^i\om_0:\,0\leq i<n_0\}$ and
the spaces $\cE_\om$ are locally independent of $\om$ around the points $\te^{j}\om_0,\, 0\leq j<n_0$.

(ii) The map $\te$ is continuous at the points $\om_0,\te\om_0,...,\te^{n_0-1}\om_0$, where $\om_0$ and $n_0$ come from (i), and for any compact set $J\subset\bbR\setminus\{0\}$ the family of maps 
$\om\to\cL_{it}^\om,\,t\in J$ is equicontinuous continuous (with respect to the operator norm $\|\cdot\|_{\al,\xi}$) at the points $\te^{j}\om_0,\,0\leq j<n_0$.

(iii) The function $S_{n_0}^{\om_0}u$ is non-arithmetic 
(aperiodic) with respect to the map $T^{n_0}_{\om_0}$ in the classical sense of \cite{HH}, namely 
if for any $t\in\bbR\setminus\{0\}$ there exist no 
nonzero $g\in\cH_{\om_0}^{\al,\xi}$ and $\la\in\bbC$, $|\la|=1$ 
such that 
\begin{equation}\label{lattice condition 1.}
e^{itS_{n_0}^{\om_0}u}g=\la g\circ T_{\om_0}^{n_0}.
\end{equation}
\end{assumption}
Note that under Assumption \ref{PerPointAss} we showed in Chapter 7 of \cite{book} that, in fact, 
 for any compact set $J\subset\bbR\setminus\{0\}$, $P$-.a.s for any $n\geq1$ we have 
\begin{equation}\label{ExpR}
\sup_{t\in J}\|\cA^{\om,n}_{it}\|_1\leq d_\om 2^{-cn}
\end{equation}
for some random variable $d_\om>0$ and a constant $c>0$.
In Chapter 7 of \cite{book} we stated that (\ref{ExpR}) holds true under the additional assumption that $P(U)>0$ for any open set $U$, but in the proof we relied on that only for $U$'s which intersect the orbit $\{\om_0,\te\om_0,...,\te^{n_0-1}\om_0\}$.
The condition about  continuity of $\om\to\cL_{it}^\om$ holds true, for instance, 
when the maps $\om\to \phi_\om, u_\om\in\cH^{\al,\xi}_\om$ are continuous at the 
points $\te^{j}\om_0,\, 0\leq j<n_0$ and the  branches of $T_\om$ are either locally constant as functions of $\om$ there, or they are continuous 
at these points with respect to an appropriate topology. For instance, we can just assume that $T_\om$ does not depend on $\om$ around the points $\te^i\om_0$. This is the case when $(\Om,\cF,\sig)$  is a shift system built over a countable alphabet $\mathscr A$ so that $T_\om=T_{\om_0}$ depends only on the $0$-th coordinate of $\om=(\om_i)$. We can consider any sub-shift of $\Om$ instead, assuming it contains a point of the form $(...a,a,a,...)$  where $a\in\mathscr A^{n_0}$ for some $n_0$. Of course, we we also have to assume that $P(U)>0$ for the appropriate $U$'s. 
When considering interval maps $T_\om$ with deterministic monotonicity intervals $I_1,...,I_D$ (or similar multidimensional examples), then the condition about the continuity holds true if the restriction of the inverse of each map $T_\om|_{I_i}$ is continuous in $\om$ at $\om=\te^j\om_0$, $i=0,...,n_0-1$. 

Note that when $\cX$ is a smooth connected Riemannian manifold then we can also consider the norms $\|g\|_{C^1}=\sup|g|+\sup|Dg|$ instead of $\|\cdot\|_{\al,\xi}$. In this case when each $T_\om$ is also smooth and the map $\om\to T_\om$ is continuous in the $C^1$ topology then the transfer operators $\om\to\cL_\om$ are continuous in the operator-norm topology (corresponding to the norm $\|\cdot\|_{C^1}$), see Proposition 5.3 in \cite{castro}. Therefore, when considering smooth functions $\phi_\om$ and $u_\om$ which are continuous in the $C^1$-topology as functions of $\om$ we get that the conditions in item (ii) above are satisfied with $\|\cdot\|_{C^1}$ instead of $\|\cdot\|_{\al,\xi}$, and all our results hold true.


\subsubsection{Edgeworth expansions of high orders: verification of Assumption \ref{A4}}
Now we will explain in which circumstances Assumption \ref{A4} is satisfied with any  $r$. We first recall the following results from  \cite{[3]}. Let $\mathscr{L}_b,\,b\in\bbR$ be the deteministic transfer operators  which are generated by two  piecewise smooth expanding functions $f:\bbT\to\bbT$ and $\tau:\bbT\to\bbT$ and is given by the formula
\[
\mathscr{L}_bh(x)=\sum_{y\in f^{-1}\{x\}}|f'(y)|^{-1}h(y)e^{ib\tau(y)}.
\]
Let $\|\cdot\|_2=\|\cdot\|_{{\textbf{BV}}}=\|\cdot\|_{L^1(\bbT)}+\text{Var}(\cdot)$ be the norm in the space of functions with bounded variation. 
When $\tau$ is not cohomologous to a piecewise constant function (with respect to the base map $f$, see \cite{[3]}), 
then by Proposition 1 in \cite{[3]}, there exist positive constants $b_0,\rho$ and $\gam_2$ such that
\begin{equation}\label{Prop1}
\|\mathscr{L}_b^{n(b)}\|_{(b)}\leq e^{-n(b)\gam_2}\,\,\text{ for any }\,\,|b|\geq b_0,\,n(b):=[\rho\ln|b|]
\end{equation}
where $\|h\|_{(b)}=(1+|b|)^{-1}\|h\|_{\textbf{BV}}+\int_{\bbT}|h(x)|dx$. In \cite{Liverani} the authors showed that this implies 
that Assumption \ref{A4} holds true in this deterministic case.  

Next, we assume that $\cE_\om=\bbT=[0,1)$ for any $\om$ and that, in addition to the requirements at the beginning of Section \ref{SkewProdSec}, the functions $T_\om$ and $u_\om$ are piecewise smooth expanding functions and that $u_\om$ takes values in $\bbT$. Then Assumptions \ref{BaseAss} holds true. 
In order to keep our notations in line with the ones in \cite{[3]}, we set  $f_\om=T_\om$ and $\tau_\om=u_\om$. We also set
$\phi_\om=-\ln |f_\om'|$. 
We assume here that there exists no random family $\eta_\om$ of functions $\eta_\om:\bbT\to\bbT$ so that $\tau_\om-\eta_{\te\om}\circ f_\om+\eta_\om$ is piecewise constant ($P$-a.s.).
Then, when $(\Om,\cF,P,\te)$ is ergodic, an appropriate quenched version of Proposition  1 in \cite{[3]}
 holds true when we replace $f^n$ and $\sum_{j=0}^{n-1}\tau\circ f^j$ with $f_\om^n:=f_{\te^{n-1}\om}\circ f_{\te^{n-2}\om}\circ\cdots\circ f_\om$ and $\sum_{j=0}^{n-1}\tau_{\te^j\om}\circ f_\om^j$, respectively, when all the conditions in \cite{[3]} hold true uniformly in $\om$ 
 in the following sense:
 there exist random variables $\rho_\om,\gam_2(\om),b_\om>0$ so that $P$-a.s., for any $b\in\bbR$,
\begin{equation}\label{RandomProp1}
\|\cL^{\om,n_\om(b)}_{ib}\|_{(b)}\leq e^{-n_\om(b)\gam_2(\om)}\,\,\text{ for any }\,\,|b|\geq b_\om,\,n_\om(b):=[\rho_\om\ln|b|].
\end{equation}

Before explaining how the proof of (\ref{RandomProp1}) differs from the proof of (\ref{Prop1}), we will rely on (\ref{RandomProp1}) in order to show that Assumption \ref{A4} holds true. Indeed, let 
$b_0,\rho_1,\rho_2,\gam_2>0$  be so that 
\[
P\{\om:\,\rho_1\leq \rho_\om\leq\rho_2,\,\gam_2(\om)\geq\gam_2\,\text{ and }
b_\om\leq b_0\}:=a>0.
\]
Then by ergodicity of 
$(\Om,\cF,P,\te)$, for $P$-almost any $\om$ there exists a sequence of positive integers $n_1<n_2<n_3<...$ so that for each $i$, 
\[
\rho_1\leq \rho_{\te^{n_i}\om}\leq\rho_2,\,\, b_{\te^{n_i}\om}\leq b_0\,\text{ and }\, \gam_2(\te^{n_i}\om)\geq \gam_2
\]
and $\lim_{k\to\infty}k^{-1}n_k=a^{-1}$. It follows that for any  $c>0$ there exists a constant $c_1>0$ so that 
for any sufficiently large $n$, there are indexes $i_1<i_2<...<i_s$, $s=s_n=[c_1\frac{n}{\ln n}]$, $i_j=i_j(n)$ with the property that $n_{i_j}+c\ln n<n_{i_{j+1}}<n-c\ln n$ for any $j=1,2,...,s-1$. In fact, since $n^{-1}\{\max k: n_k\leq n\}$ converges as $n\to\infty$ to $a$, we can always take $i_1=1$, $i_s<\frac n2-c\ln n$ and $c_1=\frac{a}{4(c+1)}$.

Next, it follows exactly as in Lemma 2 in \cite{[3]} that $\|\cL_{ib}^{\om,n}\|_{(b)}\leq C$ for any $b$ and $n$, where $C\geq1$ is some constant (in fact, it is possible to take $C=1$, but it does not matter for our purpose). Let $d>0$
and let $b_1>\max(b_0,1)$ be so that $b_1^{\frac12\rho_1\gam_2}>2C$. 
 Then for any real $b$ so that $b_1\leq |b|\leq n^{d}$ we have 
$\rho_2\ln|b|\leq\rho_2d\ln:=c\ln n$. Let $c_1$ and $i_1<i_2<...<i_s$ (for a sufficiently large $n$) be as describe in the previous paragraph  with $c=\rho_2d$.
For $n$ large enough, set 
\begin{eqnarray*}
\om_j=\om_j(n)=\te^{n_{i_j}}\om,\, m_j(b)=[\rho_{\om_j}\ln|b|],\, 
d_j(b)=d_j(b,n)=n_{i_{j+1}}-n_{i_{j}}-m_j(b),\\ \text{ where } n_{s+1}:=n, \text{ and } 
\om'_{j}(b)=\om'_j(b,n)=\te^{[\rho_{\om_{j}}\ln|b|]}\om_j.
\end{eqnarray*}
By writing 
\[
\cL_{ib}^{\om,n}=\cL_{it}^{\om'_{s}(b),d_s(b)}\circ\cL_{ib}^{\om_s,m_s(b)}\circ\cdots\circ\cL_{ib}^{\om_2,m_2(b)}\circ\cL^{\om'_{1}(b),d_j(b)}_{ib}\circ\cL_{ib}^{\om_1,m_1(b)}\circ\cL_{ib}^{\om,n_{i_1}}
\]
and using submultiplicativity of norms of operators, we deduce that $P$-a.s.
for any sufficiently large $n$ and real $b$ so that $b_1\leq |b|\leq n^{d}$ we have
\[
\|\cL_{ib}^{\om,n}\|_{(b)}\leq C^{1+\frac{c_1 n}{\ln n}}e^{-\frac12c_1\rho_1\gam_2\ln|b|\frac{n}{\ln n}}
=C\big(|b|^{\frac12\rho_1\gam_2}C^{-1}\big)^{-\frac{c_1 n}{\ln n}}\leq 
C2^{-\frac{c_1 n}{\ln n}}
\]
which implies that (\ref{EqA4}) from Assumption \ref{A4} holds true with $d_1=d$  and $K_\om=b_1$ (since $|b|\leq n^d\leq 2^{2d\ln n}$).

Now we will explain the differences between the derivations of (\ref{Prop1}) and (\ref{RandomProp1}), 
and show that a random variable $R_\om$ with the properties required in Assumption \ref{A4} exists.
 First, in the above circumstances by Theorem 2.2 in \cite{Kifer-Thermo} we have $\boldsymbol{\la}_\om(0)=1$
and $\boldsymbol{\nu}_\om(0)=\text{Lebesgue}$.
By taking $\al=1$, the arguments in Chapter 5 of \cite{book} show that 
$\|h_\om(0)\|_{\textbf{BV}}$ and $\|1/h_\om(0)\|_{\textbf{BV}}$ are both bounded random variables. Therefore, in order to show that Assumption \ref{A4} hold true with the operators $\cA_{it}^{\om,n}$, it is sufficient to show that it holds true with the operators $\cL_{it}^{\om,n}$ (and the norm $\|\cdot\|_2=\|\cdot\|_{\textbf{BV}}$).
Let
$\Lambda$ and $\tilde\la>1$ be positive constants so that $P$-a.s.
\[
\Lambda\geq\sup|f_\om'|\geq\inf|f_\om'|\geq\tilde\la.
\]
Then all the arguments in the proofs  in Section 2 of \cite{[3]} proceed similarly with the transfer operators $\cL_{ib}^{\om,n}$ in place of $\mathscr L_{ib}^{n}$, since they only rely on the expansion properties of $f$ and $\tau$. In particular, $\|\cL_{it}^{\om,n}\|_{\textbf{BV}}\leq R(1+|t|)$ for some constant $R>0$. Moreover, it is possible to construct  partitions $\Om_n=\Om_n^\om,\,n\geq0$ almost exactly as in \cite{[3]} (see the paragraph preceding  Lemma 4 in \cite{[3]}), which in our case will  also depend on $\om$: their recursive  
construction in our nondeterministic setup proceeds exactly as in \cite{[3]}, expect that in the $n$-th step of the construction we consider the function $f_{\te^{n}\om}$ in place of $f$. The estimates derived in Lemma 4 in \cite{[3]} from there  hold true also for the $\Om_n^\om$'s, and the proof goes exactly in the same way as in \cite{[3]}.

Now we will explain how to generalize the ideas from Section 3 in \cite{[3]} to our non-deterministic setting. 
Set $F_\om(x,u)=(f_\om(x),u+\tau_\om(x))$, where $x,y\in\bbT$.
Then 
\[
F^n_\om(x,u):=F_{\te^{n-1}\om}\circ F_{\te^{n-2}\om}\circ\cdots\circ
F_\om(x,u)=\big(f_\om^n(x),u+\sum_{j=0}^{n-1}\tau_{\te^j\om}\circ f_\om^j(x)\big)
\]
and the differential $DF_\om(x,u)$ of $F_\om$ does not depend on $u$. 
 The fact that $f_\om$ and $\tau_\om$ are expanding uniformly in $\om$ implies the  cones $\cK=\{(\al,\be): |\beta||\al|^{-1}\leq C_1\}$ are invariant under each $DF_\om$. Here $C_1$ is a constant so that $P$-a.s.,
\[
C_1\geq\sup|\tau_\om'|(\tilde\la-1)^{-1}.
\]
Thus, the notation of transversality from Section 3 in \cite{[3]} is naturally extended to our setup, using $DF^n_\om(x)$ in place of the differential of $F^n(x,u)=\big(f^n(x),u+\sum_{j=0}^{n-1}\tau\circ f^j(x)\big)$. 
That is, for any $y\in\bbT$, we will say that $x_1,x_2\in (f_\om^n)^{-1}\{y\}$ are transversal if $DF^n_\om(x_1)\cK\bigcap DF^n_\om(x_2)\cK=\{0\}$.
Set $J_{\om,n}=|(f_\om^n)'|^{-1}$, and
define the quantity $\varphi_\om(n)$ by
\[
\varphi_\om(n)=\sup_{y\in\bbT}\sup_{x_1\in (f_\om^n)^{-1}\{y\}}
\sum_{x_2\in A_\om(x_1,n,y)}J_{\om,n}(x_2)
\]
where $A_\om(x_1,n,y)$ is the set of all preimages of $y$ by $f_\om^n$ which are not transversal to $x_1$. We also define
the quantity $\tilde\varphi_\om(n)$ 
by 
\[
\tilde\varphi_\om(n)=\sup_{y}\sup_{L}\sum_{\substack{
x\in (f_\om^n)^{-1}\{y\}\\L\subset DF_\om^n(x)\cK}}J_{\om,n}(x)\frac{\textbf{h}_\om(x)}{\textbf{h}_{\te^n\om}(y)}
\]
where $L\subset\bbR^2$ is a line which passes through the origin
and $\textbf{h}_\om=\textbf{h}_\om(0)$ is the  random function from the RPF triplet, i.e. the one satsifying that $\cL_0^\om \textbf{h}_\om=\textbf{h}_{\te\om}$ (recall that $\boldsymbol{\la}_\om(0)=1$). Note that (see Chapter 5 in \cite{book}) the function $\textbf{h}_\om$ is strictly positive and is bounded and bounded away from $0$ uniformly in $\om$.
 Then $\tilde\varphi_\om(n)\leq 1$ and it satisfies that 
\[
\tilde\varphi_\om(n+m)\leq \tilde\varphi_\om(n)\cdot \tilde\varphi_{\te^n\om}(m).
\]
By Kingman's subadditive ergodic theorem, the limit $\lim_{n\to\infty}(\tilde\varphi_\om(n))^{\frac1n}$ exists, $P$-a.s., and it does not depend on $\om$. When this limit equals $1$ then, by considering the subadditive sequence $g_n=\int \ln\tilde\varphi_\om(n)dP(\om)$, it follows that $\tilde\varphi_\om(n)=1$ for $P$-a.a. $\om$ and all $n$'s.
Relying on that, the arguments from the proof of Lemma 7 in \cite{[3]} show that
\[
\limsup_{n\to\infty}\varphi_\om(n)<1\,\,\text{for }\,P\text{-a.a. } \om
\]
if there exists no random family $\eta_\om$ of functions $\eta_\om:\bbT\to\bbT$ so that $\tau_\om-\eta_{\te\om}\circ f_\om+\eta_\om$ is piecewise constant ($P$-a.s.). The rest of the derivation of (\ref{RandomProp1}) proceeds now as in Section 4 of  \cite{[3]}, relying on the uniform (in $\om$) expansion properties of the functions $f_\om$ and $\tau_\om$.

\begin{remark}
The assumption that $u_\om$ is continuous can be dropped since all the results from \cite{book} hold true also when $u_\om$ is piecewise H\"older continuous. We will not present here all the details and instead refer the readers to Assumption 5.2.1 in  \cite{book}, and to the results proceeding it.
\end{remark}

\subsubsection{Convergence rates towards the asymptotic moments}
In the circumstances of Section \ref{MomRateSec}, 
the operators $\cA_z^{\om}$ do not depend only on the $0$-th coordinate even when $\cL_z^\om$ depends on it (only). Still, in order to obtain the convergence rates stated in Section \ref{MixSec}, it is enough to obtain the appropriate almost sure rates in the convergence of the ergodic averages $n^{-1}\sum_{j=0}\Pi_{\te^j\om}^{(k)}(0)$ (see the arguments in Section \ref{MomSec}. Let us  denote by $\boldsymbol{\Pi}_\om(z)$ the pressure function corresponding to $\boldsymbol\la_{\om}(z)$. In (4.2) in \cite{SeqRPF} we have proved that there exist constants $c_1$ and $\rho_1$ so that  $P$-a.s. for any $z\in\bbC$ with $|z|<\rho_1$ and any $n\geq 1$ we have
\[
\left|\sum_{j=0}^{n-1}\Pi_{\te^j\om}(z)-\sum_{j=0}^{n-1}\boldsymbol\Pi_{\te^j\om}(z)\right|\leq c_1.
\]
Note that it is indeed possible to bound from below $\min |h_\om(z)|$ when $z$ belongs to some deterministic neighborhood of the origin since $h_\om(x)\geq C>0$ for some constant $C>0$ which does not depend on $\om$ and $x$ (see (5.9.4) and (4.3.32) in \cite{book}).
Therefore, when $\cL_z^\om$ depend only on the $0$-th coordinate, we can just repeat the argument from Section \ref{MomRateSec}  with $\boldsymbol\Pi$ in place of $\Pi$ we obtain all the results stated in Section \ref{MixSec} under the mixing conditions from Section \ref{MomRateSec}. 


\subsection{Markov chains with transition densities}\label{Densities}
For any $\om\in\Om$ denote by $B_\om=H_1^\om$ the Banach space of all bounded
Borel functions $g:\cE_\om\to\bbC$ together with the supremum norm $\|\cdot\|_\infty$.
For any $g:\cE\to\bbC$ consider the functions $g_\om:\cE_\om\to\bbC$ given by 
$g_\om(x)=g(\om,x)$.
Then by Lemma 5.1.3 in \cite{book}, the norm $\om\to\|g_\om\|_\infty$ is a $\cF$-measurable
function of $\om$, for any measurable $g:\cE\to\bbC$.

Let $r_\om=r_\om(x,y):\cE_\om\times\cE_{\te\om}\to[0,\infty),\,\om\in\Om$ be a family of 
integrable in $y$ Borel measurable functions, $m_\om,\,\om\in\Om$ be a family of Borel probability measures
on $\cE_\om$ and $u:\cE\to\bbR$ be a measurable function so that $u_\om\in B_\om,\,$ $P$-a.s.   
and that the random variable $\sup|u_\om|=\|u_\om\|_\infty$ is bounded.
Consider the family of random operators $R_z^\om,\,z\in\bbC$ which map (bounded) 
Borel functions $g$ on $\cE_{\te\om}$ to Borel measurable functions on $\cE_\om$ 
by the formula
\begin{equation}\label{IntOp}
R^\om_zg(x)=\int_{\cE_{\te\om}} r_\om(x,y)e^{zu_{\te\om}(y)}g(y)dm_{\te\om}(y).
\end{equation} 
We will assume that 
$R_0^\om$ are Markov operators, namely that $R_0^\om \textbf{1}=\textbf{1}$
where $\textbf{1}$ is the function which takes the constant value $1$ on $\cE_{\te\om}$.
Observe that 
\[
\|R_0^\om\|_\infty:=
\sup_{g\in B_{\te\om}:\|g\|_\infty\leq 1}\|R_0^\om g\|_\infty=\|R_0^\om\textbf{1}\|_\infty
\]
and therefore for $P$-a.a. $\om$ 
we have $\|R_z^\om\|_\infty<\infty$ for any 
$z\in\bbC$, namely,  $R_z^\om$ is a continuous linear operator between the Banach spaces 
$B_{\te\om}$ and $B_\om$.

\begin{assumption}\label{Meas ass}
The functions $r_\om$ are measurable in $\om$ and the 
the measures $m_\om$ are measurable in the sense that
the map $\om\to\int_{\cE_{\om}} g_{\om}(x)dm_{\te\om}(x)$ is measurable for any 
measurable function $g:\cE\to\bbC$ so that $g_\om(\cdot)$ is bounded $P$-a.s.
\end{assumption}
Under this assumption the map $(\om,x)\to R_0^\om g_{\te\om}(x)$ is measurable for any measurable function $g=g(\om,x)=g_\om(x)$.

For any $\om\in\Om$, $n\in\bbN$ and $z\in\bbC$ consider the
$n$-th order iterates $R_z^{\om,n}:B_{\te^n\om}\to B_\om$
given by 
\begin{equation}\label{Int Op oter 1}
R_z^{\om,n}=R_z^\om\circ R_z^{\te\om}\circ\cdots\circ R_z^{\te^{n-1}\om}.
\end{equation}
Then we can write
\[
R_0^{\om,n}g(x)=\int _{\cE_{\te^n\om}}r_\om(n,x,y)g(y)dm_{\te^n\om}(y)
\]
for some family $r_\om(n,\cdot,\cdot)=r_\om(n,x,y):\cE_\om\times\cE_{\te^n\om}\to[0,\infty)$
of integrable in $y$ Borel
measurable functions.
We will assume that the following random version of the two sided Doeblin condition holds true. 
\begin{assumption}\label{Doeb}
There exist a bounded random variable $j_\om\in\bbN$ 
and random variables $0<\al_m(\om)\leq 1$, $m\in\bbN$ such that $P$-a.s.,
\begin{equation}\label{al j om}
\al_m(\om)\leq r_\om(m,x,y)\leq\big(\al_m(\om)\big)^{-1},
\end{equation}
for any $m\geq j_\om\,$, $x\in\cE_\om$ and $y\in\cE_{\te^m\om}$.
Moreover, let $j_0$ be so that $j_\om\leq j_0$, $P$-a.s. Then there exists $\al>0$
so that $\al_{j_0}(\om)\geq\al$.
\end{assumption}
Under the above assumptions, we showed in Chapter 6 of \cite{book} that the family of operators $\cA_z^\om=R_z^{\te^{-1}\om}$ satisfies Assumption \ref{BaseAss} with the measure preserving system $(\Om,\cF,P,\te^{-1})$.

 Let $\mu_\om$ be any probability measure on $\cE_\om$ and let $\xi_n^{\te^n\om},\,n\geq1$
be the Markov chain with initial distribution $\mu_\om$ whose $n$-th step operator is given by 
$R_0^{\om,n}$. Set 
\[
S_n^\om=\sum_{j=0}^{n-1}u_{\te^j\om}(\xi_j^{\te^j\om}).
\]
Then, 
\[
\bbE e^{itS_n^\om}=\int\mu_\om(R_{it}^{\om,n}\textbf{1})dP(\om)
\]
and therefore all the results stated in Section \ref{sec2} hold true with the random variables $S_n^{\te^{-n}\om}$.


\subsubsection{Edgeworth expansions of order one: verification of Assumptions \ref{A3}}
The first part of Assumption \ref{A3} holds true with the supremum norm since $\|R_{it}^\om\|_\infty\leq\|R_0^\om\|_\infty=1$. 
Let $\om_0\in\Om$ and $n_0\in\bbN$ be so that $\te^{n_0}\om_0=\om_0$. Suppose that for any $t\in\bbR\setminus\{0\}$ the spectral radius $r(t)$ of the operator $R_{it}^{\om_0,n_0}$ is strictly less than $1$. 
We refer the reader to Section 6.3 in \cite{Liverani} for conditions guaranteeing that $r(t)<1, t\not=0$ (see the verification of Assumption (A3) there). Then by Lemma 2.10.4 in  \cite{book}
 the second part Assumption \ref{A3} holds true when Assumption \ref{PerPointAss} is satisfied with $R_z^\om$ in place of $\cL_z^\om$ and with the norm $\|\cdot\|_\infty$ instead of $\|\cdot\|_{\al,\xi}$.  
 Note that in Chapter 7 we have also verified Assumption \ref{A3} under certain type of non-lattice condition. These results were obtained under the assumption that $P(j_\om=j_{\te\om}=1)>0$ (see Assumption 7.1.4 there), and it is also possible to extend them to more general cases (see Section 7.4.4 there).

\subsubsection{Edgeworth expansions of high orders: verification of Assumptions \ref{A4}}
We will show that Assumption \ref{A4} holds true with $H_2^\om=B_\om$  under the following 
\begin{assumption}\label{AboveAss}
The system $(\Om,\cF,P,\te)$ is ergodic and
the probability that
\begin{equation}\label{Cond}
\limsup_{|t|\to\infty}\|R_{it}^{\om,2}\|_\infty<1
\end{equation}
is positive.
\end{assumption}
Before verifying Assumption \ref{A4} we will provide conditions guaranteeing it holds true.
The following result is proved exactly as in \cite{Liverani}.
\begin{lemma}\label{Lem}
Let $\om\in\Om$ and suppose that $\cE_\om=\cE_{\te\om}$ is a compact connected smooth
manifold, that $u_{\te\om}$ and $u_{\te^2\om}$ are piecewise smooth and that $r_\om$ is a $C^1$-function. Moreover, assume that
$j_\om=j_{\te\om}=1$ and that the function $(y_1,y_2)\to u_{\te\om}(y_1)+u_{\te^2\om}(y_2)$ is not piecewise
constant. Then condition (\ref{Cond}) holds true with the above $\om$.
\end{lemma}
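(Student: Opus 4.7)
The plan is to reduce the estimate on $\|R_{it}^{\om,2}\|_\infty$ to a one-dimensional oscillatory integral bound on an inner integral in $y_1$. Expanding the composition,
\begin{equation*}
R_{it}^{\om,2} g(x) = \int_{\cE_{\te^2\om}}\!\int_{\cE_{\te\om}} r_\om(x,y_1)\, r_{\te\om}(y_1,y_2)\, e^{it[u_{\te\om}(y_1)+u_{\te^2\om}(y_2)]}\, g(y_2)\, dm_{\te\om}(y_1)\, dm_{\te^2\om}(y_2).
\end{equation*}
The unimodular factor $e^{it u_{\te^2\om}(y_2)}$ can be absorbed into the $y_2$-kernel; since for a positive-kernel integral operator $\|R_{it}^{\om,2}\|_\infty$ equals the sup over $x$ of the $y_2$-integral of the modulus of the $y_2$-kernel, one obtains
\begin{equation*}
\|R_{it}^{\om,2}\|_\infty = \sup_x \int_{\cE_{\te^2\om}} \Bigl| \int_{\cE_{\te\om}} r_\om(x,y_1)\, r_{\te\om}(y_1,y_2)\, e^{it u_{\te\om}(y_1)}\, dm_{\te\om}(y_1) \Bigr| dm_{\te^2\om}(y_2).
\end{equation*}

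The next step is to extract from the non-piecewise-constancy of the sum $u_{\te\om}(y_1)+u_{\te^2\om}(y_2)$ an open set $U_0 \subset \cE_{\te\om}$ on which $u_{\te\om}$ is $C^1$ with $|u_{\te\om}'|\geq c_0>0$. I would then split the inner integral as $\int_{U_0}+\int_{\cE_{\te\om}\setminus U_0}$. On $U_0$, a single integration by parts, justified by the $C^1$ regularity of $r_\om$ and $r_{\te\om}$ together with the piecewise smoothness of $u_{\te\om}$, gives uniformly in $x$ and $y_2$
\begin{equation*}
\Bigl|\int_{U_0} r_\om(x,y_1)\, r_{\te\om}(y_1,y_2)\, e^{it u_{\te\om}(y_1)}\, dm_{\te\om}(y_1)\Bigr| \leq \frac{C}{|t|},
\end{equation*}
with boundary terms controlled by the sup norms of $r_\om, r_{\te\om}, 1/u_{\te\om}'$ and their derivatives. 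On the complement I would use the crude absolute-value bound. Summing, integrating over $y_2$, and invoking $R_0^{\om,2}\textbf{1}=\textbf{1}$ together with the Doeblin lower bound $r_\om\geq\alpha>0$ coming from $j_\om=1$, I obtain
\begin{equation*}
\|R_{it}^{\om,2}\|_\infty \leq \frac{C'}{|t|} + 1-\alpha\, m_{\te\om}(U_0),
\end{equation*}
so $\limsup_{|t|\to\infty}\|R_{it}^{\om,2}\|_\infty \leq 1-\alpha\, m_{\te\om}(U_0)<1$.

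The step I expect to be the main obstacle is isolating $U_0$ inside $\cE_{\te\om}$. The hypothesis concerns the sum, and the product structure of piecewise-constancy in two variables shows this is equivalent to at least one of $u_{\te\om}$ and $u_{\te^2\om}$ being non-piecewise-constant; the integration-by-parts estimate above, however, needs the oscillation to live in $y_1$, the variable integrated \emph{before} the outer modulus erases phases. When it is $u_{\te^2\om}$ rather than $u_{\te\om}$ that is non-piecewise-constant I would symmetrize, applying the same argument to the dual operator $(R_{it}^{\om,2})^*$ (whose kernel, by the two-sided Doeblin bound (\ref{al j om}), again satisfies the required positivity and smoothness, and now has the oscillating variable in the inner slot), and then transferring the resulting bound back to $R_{it}^{\om,2}$ via $\|(R_{it}^{\om,2})^*\|=\|R_{it}^{\om,2}\|$ at the level of the operator norm on $L^\infty$ after identification with a suitable $L^1$ estimate.
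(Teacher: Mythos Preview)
The paper gives no proof beyond the reference to \cite{Liverani}; your oscillatory-integral approach is exactly what one finds there, and when $u_{\te\om}$ itself is not piecewise constant your argument is essentially correct. Two issues remain.

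Your integration by parts in $y_1$ requires $y_1\mapsto r_{\te\om}(y_1,y_2)$ to be $C^1$, which the hypothesis does not provide (only $r_\om$ is assumed $C^1$). This is not fatal: since the conclusion asks only for $\limsup<1$ and not for a rate, you can replace the $C/|t|$ bound by a Riemann--Lebesgue argument (change variables $v=u_{\te\om}(y_1)$ on $U_0$) and then use the uniform continuity of $r_\om$ in $x$ together with compactness of $\cE_\om$ to pass from pointwise to uniform convergence of $x\mapsto\int|K_t(x,y_2)|\,dm_{\te^2\om}(y_2)$.

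The duality workaround, however, cannot succeed, because the conclusion is actually \emph{false} when only $u_{\te^2\om}$ carries the oscillation. If $u_{\te\om}$ takes finitely many values $c_1,\dots,c_M$ on pieces $P_1,\dots,P_M$, then the choice $g(y_2)=e^{-itu_{\te^2\om}(y_2)}$ gives
\[
R_{it}^{\om,2}g(x)=\sum_{j=1}^M e^{itc_j}\int_{P_j}r_\om(x,y_1)\,dm_{\te\om}(y_1),
\]
and by simultaneous Diophantine approximation there is a sequence $|t_n|\to\infty$ along which all phases $e^{it_nc_j}$ approach $1$, so $\|R_{it_n}^{\om,2}\|_\infty\to 1$. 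The identity $\|T^*\|=\|T\|$ you invoke relates the $L^\infty$ operator norm to the norm on the dual space of measures, not to another $L^\infty$ estimate with the roles of the integration variables swapped, so it cannot move the oscillation into the inner slot. This asymmetry is invisible in the deterministic setting of \cite{Liverani}, where $u_{\te\om}=u_{\te^2\om}$ and hence non-piecewise-constancy of the sum already forces it for the inner factor; in the random case the lemma should simply assume that $u_{\te\om}$ is not piecewise constant, and then your main argument (with the Riemann--Lebesgue fix above) goes through.
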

Assumption \ref{Cond} holds true  when
the probability that $\om$ satisfies the conditions of Lemma \ref{Lem} is positive.

Now we will verify Assumption \ref{A4}.
Under Assumption \ref{AboveAss}, there exist $\ve_0,r_0>0$ so that $p_0=P(\Gam_0)>0$, where $\Gam_0$ is the set of all $\om$'s so that 
\begin{equation*}
\sup_{t: |t|\geq r_0}\|R_{it}^{\om,2}\|_\infty\leq 1-\ve_0.
\end{equation*}
Since $\te$ is ergodic $P$-a.s. there exists a sequence $n_1<n_2<n_3<...$ of
positive integers so that
\[
\lim_{k\to\infty}\frac{n_k}{k}=\frac1{p_0}\,\,\text{ and }\te^{-n_i}\om\in\Gam_0\,\,\forall\,i\in\bbN.
\]
Set $m_i=n_{2i}$, $\om_i=\te^{-m_i}\om$ and write
\begin{eqnarray*}
\cA_{it}^{\te\om,n}=R^{\te^{-n+1}\om,n}_{it}=R_{it}^{\te^{-n+1}\om}\circ R_{it}^{\te^{-n+2}\om}\circ\cdots\circ
R_{it}^{\om}\\
=R_{it}^{\te^{2}\om_{k_{n}},n-k_{n}-2}\circ R_{it}^{\om_{k_{n}},2}\circ\cdots\circ R_{it}^{\om_2,2}\circ R_{it}^{\te^{2}\om_1,m_2-m_1-2}R_{it}^{\om_1,2}\circ R_{it}^{\te^{2}\om_2,m_1-2}
\end{eqnarray*}
where $k_n=\max\{i: m_i\leq n-3\}$. Since $\|R_{it}^\om\|_\infty\leq 1$ and 
$k_n/n$ converges to $\frac 12p_0$ as $n\to\infty$, we obtain from submultiplicativity of norms of operators that
$P$-a.s., for any $t\in\bbR$ so that $|t|\geq r_0$ and any sufficiently large $n$,
\[
\|\cA_{it}^{\om,n}\|_\infty\leq (1-\ve_0)^{k_{n-2}}\leq (1-\ve_0)^{\frac14p_0n}
\]
which implies that Assumption \ref{A4} holds true. We note that Assumption \ref{A4} also
holds true with the norms $\|\cdot\|_\infty$ when appropriate random versions of conditions $(C)$ and $(B_k)$ from \cite{Neg2} are satisfied
(with positive probability).

\end{document}